\documentclass[11pt, a4paper]{amsart}
\usepackage{amsmath, amsthm, amsfonts, amssymb, mathtools}
\usepackage{a4wide}
\usepackage[latin1]{inputenc}
\usepackage{graphicx}
\usepackage{enumitem}
\usepackage{bbm}
\newcommand{\SL}{\mathrm{SL}}
\newcommand{\N}{\mathbb{N}}
\newcommand{\Z}{\mathbb{Z}}
\newcommand{\R}{\mathbb{R}}
\newcommand{\C}{\mathbb{C}}
\newcommand{\Q}{\mathbb{Q}}
\renewcommand{\H}{\mathbb{H}}
\DeclareMathOperator{\tr}{tr}
\DeclareMathOperator{\sgn}{sgn}
\renewcommand{\Im}{\mathrm{Im}}
\renewcommand{\Re}{\mathrm{Re}}

\numberwithin{equation}{section}
\newtheorem{theorem}{Theorem}
\numberwithin{theorem}{section}
\newtheorem{lemma}[theorem]{Lemma}
\newtheorem{proposition}[theorem]{Proposition} 

\theoremstyle{definition} 
\newtheorem{definition}[theorem]{Definition}
\newtheorem{example}[theorem]{Example}
\newtheorem{remark}[theorem]{Remark}

\author{Steffen L\"obrich and Markus Schwagenscheidt}

\address{Korteweg-de Vries Institute for Mathematics, University of Amsterdam, Science Park 105-107, 1098 XG Amsterdam, The Netherlands}
\email{s.loebrich@uva.nl}

\address{ETH Z\"urich Mathematics Dept., R\"amistrasse 101, CH-8092 Z\"urich, Switzerland}
\email{mschwagen@ethz.ch}

\title{Locally harmonic Maass forms and periods of meromorphic modular forms}


\allowdisplaybreaks

\thanks{The work of the first author is supported by ERC starting grant H2020 ERC StG \#640159. The second author is supported by SNF project 200021\_185014. We thank Kathrin Bringmann for helpful discussions. Furthermore, we thank Joshua Males for useful remarks on an earlier draft of this paper.}

\begin{document}

\begin{abstract}
We investigate a new family of locally harmonic Maass forms which correspond to periods of modular forms. They transform like negative weight modular forms and are harmonic apart from jump singularities along infinite geodesics. Our main result is an explicit splitting of the new locally harmonic Maass forms into a harmonic part and a locally polynomial part 
that captures the jump singularities. As an application, we obtain finite rational formulas for suitable linear combinations of periods of meromorphic modular forms associated to positive definite binary quadratic forms.
\end{abstract}

\maketitle

\section{Introduction and statement of results}

\subsection{Locally harmonic Maass forms and cycle integrals} In the early 2000s, Zwegers \cite{zwegers} made the groundbreaking discovery that Ramanujan's mock theta functions, whose precise automorphic nature had been a long-standing conundrum, could be viewed as the holomorphic parts of \emph{harmonic weak Maass forms} of weight $1/2$ whose shadows are unary theta functions of weight $3/2$. The theory of harmonic weak Maass forms was developed systematically around the same time by Bruinier and Funke \cite{bruinierfunke}. Since then, it has become a vital area of research in number theory and has found many fascinating applications, for example to the singular theta correspondence and Borcherds products \cite{bruinierhabil, bruinierfunke}, the partition function and its variants \cite{ahlgrenandersen, bruinieronoalgebraic}, special values of $L$-functions of elliptic curves \cite{bruinieronoheegner}, and CM values of higher Green functions \cite{bringmannkanevonpippich, bruinierehlenyang, li}.
 
More recently, Bringmann, Kane, and Kohnen \cite{bringmannkanekohnen} constructed a new type of harmonic weak Maass forms which have jump singularities along certain geodesics in the upper half-plane $\H$, and hence are called \emph{locally harmonic Maass forms} (see also \cite{brikavia, crawford, crawfordfunke, hoevel}). Specifically, for $k \in \Z$ with $k \geq 2$ the authors of \cite{bringmannkanekohnen} associated to each non-square discriminant $D > 0$ the function ($\tau = u+iv \in \H$)
\[
\mathcal{F}_{1-k,D}(\tau) := \frac{(-1)^k D^{\frac{1}{2}-k}}{\binom{2k-2}{k-1}\pi}\sum_{Q = [a,b,c]\in \mathcal{Q}_{D}}\sgn\left(a|\tau|^{2}+bu+c\right)Q(\tau,1)^{k-1}\psi\left(\frac{D^{2}v}{|Q(\tau,1)|^{2}} \right),
\]
where $\mathcal{Q}_{D}$ denotes the set of all (positive definite if $D < 0$) integral binary quadratic forms $Q(x,y) = ax^{2}+bxy + cy^{2}$ of discriminant $D = b^{2}-4ac$, and $\psi(v) := \frac{1}{2}\int_{0}^{v}t^{k-\frac{3}{2}} (1-t)^{-\frac{1}{2}}dt$ is a special value of the incomplete $\beta$-function. The function $\mathcal{F}_{1-k,D}(\tau)$ transforms like a modular form of negative even weight $2-2k$ for $\Gamma := \SL_{2}(\Z)$, it is bounded at the cusp, and it is harmonic on $\H$ up to jump singularities along the exceptional set
\[
E_{D} :=\{\tau=u+iv \in \H \,:\, a|\tau|^{2}+bu+c = 0 \, , \, [a,b,c] \in \mathcal{Q}_{D}\}.
\]
Note that $E_{D}$ is a union of semi-circles centered at the real line if $D > 0$ is not a square. A special feature of the locally harmonic Maass form $\mathcal{F}_{1-k,D}(\tau)$ is the fact that its images under the two differential operators
\[
\xi_{2-2k} := 2iv^{2-2k}\overline{\frac{\partial}{\partial \overline{\tau}}}, \qquad \mathcal{D}^{2k-1} := \left(\frac{1}{2\pi i}\frac{\partial}{\partial \tau} \right)^{2k-1},
\]
are non-zero multiples of the weight $2k$ cusp form
\begin{align}\label{eq fkD}
f_{k,D}(\tau) := \frac{D^{k-\frac{1}{2}}}{\pi}\sum_{Q \in \mathcal{Q}_{D}}Q(\tau,1)^{-k}.
\end{align}
In contrast, it is impossible for a harmonic weak Maass form to map to a non-zero multiple of the same cusp form under both operators $\xi_{2-2k}$ and $\mathcal{D}^{2k-1}$. The cusp form $f_{k,D}(\tau)$ can be characterized by the fact that the Petersson inner product $\langle f,f_{k,D}\rangle$ of a cusp form $f(\tau)$ of weight $2k$ with $f_{k,D}(\tau)$ is a certain multiple of the $D$-th trace of cycle integrals
\[
\tr_{D}(f) := \sum_{Q \in \mathcal{Q}_{D}/\Gamma}\int_{\Gamma_{Q}\backslash S_{Q}}f(z)Q(z,1)^{k-1}dz
\] 
of $f(\tau)$. Here $\Gamma_{Q}$ denotes the stabilizer of $Q$ in $\Gamma$ and $S_{Q}$ for $Q = [a,b,c]$ is the semi-circle consisting of all $\tau = u+iv \in \H$ with $a|\tau|^{2}+bu+c = 0$. In this sense, the locally harmonic Maass forms $\mathcal{F}_{1-k,D}(\tau)$ correspond to the (traces of) cycle integrals of cusp forms.

\subsection{Locally harmonic Maass forms and periods} In the present article we are concerned with the construction of a new family of locally harmonic Maass forms $\mathcal{H}_{1-k,n}(\tau)$, for $k \in \Z$ with $k \geq 2$ and integral $0 \leq n \leq 2k-2$, which correspond to the \emph{periods} of cusp forms, in a sense which will become apparent in a moment. In order to give the definition of $\mathcal{H}_{1-k,n}(\tau)$, for $k,\ell \in \Z$ with $k\geq 2$ we consider Petersson's Poincar\'e series ($z, \tau = u+iv \in \H$)
\begin{align}\label{eq Peterssons Poincare series}
H_{k,\ell}(z,\tau) := \sum_{M \in \Gamma}v^{k+\ell}(z-\tau)^{\ell-k}(z-\overline{\tau})^{-\ell-k}\Bigl|_{-2\ell,\tau}M,
\end{align}
with the usual slash operator applied in the $\tau$-variable. It transforms like a modular form of weight $2k$ in $z$ and of weight $-2\ell$ in $\tau$. We will be particularly interested in the case $\ell = k-1$. Then the function $H_{k,k-1}(z,\tau)$ is a meromorphic modular form of weight $2k$ in $z$ which has poles precisely at the $\Gamma$-translates of $\tau$ and decays like a cusp form towards $i\infty$. Furthermore, it transforms like a modular form of weight $2-2k$ in $\tau$ and is harmonic on $\H \setminus \Gamma z$. We can now make the following definition.

\begin{definition}
	For $k \in \Z$ with $k \geq 2$ and integral $0 \leq n \leq 2k-2$ we define the function
	\begin{align}\label{definition curlyH}
	\mathcal{H}_{1-k,n}(\tau):= \frac{(2i)^{2k-2}}{2\pi}\int_{0}^{\infty}H_{k,k-1}(iy,\tau)y^{n}dy.
	\end{align}
	If $\tau \in \bigcup_{M \in \Gamma}M(i \R_{+})$ then the integral is defined using the Cauchy principal value (see \cite{ls}, Section~3.5).
\end{definition}

In other words, the function $\mathcal{H}_{1-k,n}(\tau)$ is essentially the $n$-th period of the meromorphic modular form $z \mapsto H_{k,k-1}(z,\tau)$. In \cite{ls} we showed that the locally harmonic Maass form $\mathcal{F}_{1-k,D}(\tau)$ from \cite{bringmannkanekohnen} can be viewed as the $D$-th trace of cycle integrals of the function $z \mapsto H_{k,k-1}(z,\tau)$, which inspired the above definition of $\mathcal{H}_{1-k,n}(\tau)$. 

It is clear from the construction that $\mathcal{H}_{1-k,n}(\tau)$ transforms like a modular form of weight $2-2k$ for $\Gamma$ and is harmonic on $\H \setminus\bigcup_{M \in \Gamma}M(i \R_{+})$. Further, it follows from Theorem~\ref{theorem splitting} below that $\mathcal{H}_{1-k,n}(\tau)$ is of moderate growth at the cusp and has jump singularities along the $\Gamma$-translates of the positive imaginary axis $i\R_{+}$. In particular, the function $\mathcal{H}_{1-k,n}(\tau)$ defines a locally harmonic Maass form of weight $2-2k$ for $\Gamma$ with exceptional set $E_{1} = \bigcup_{M \in \Gamma}M(i\R_{+})$ in the sense of \cite{bringmannkanekohnen}.

\begin{figure}[h]\label{figure}
  \centering
    \includegraphics[width=0.95\textwidth]{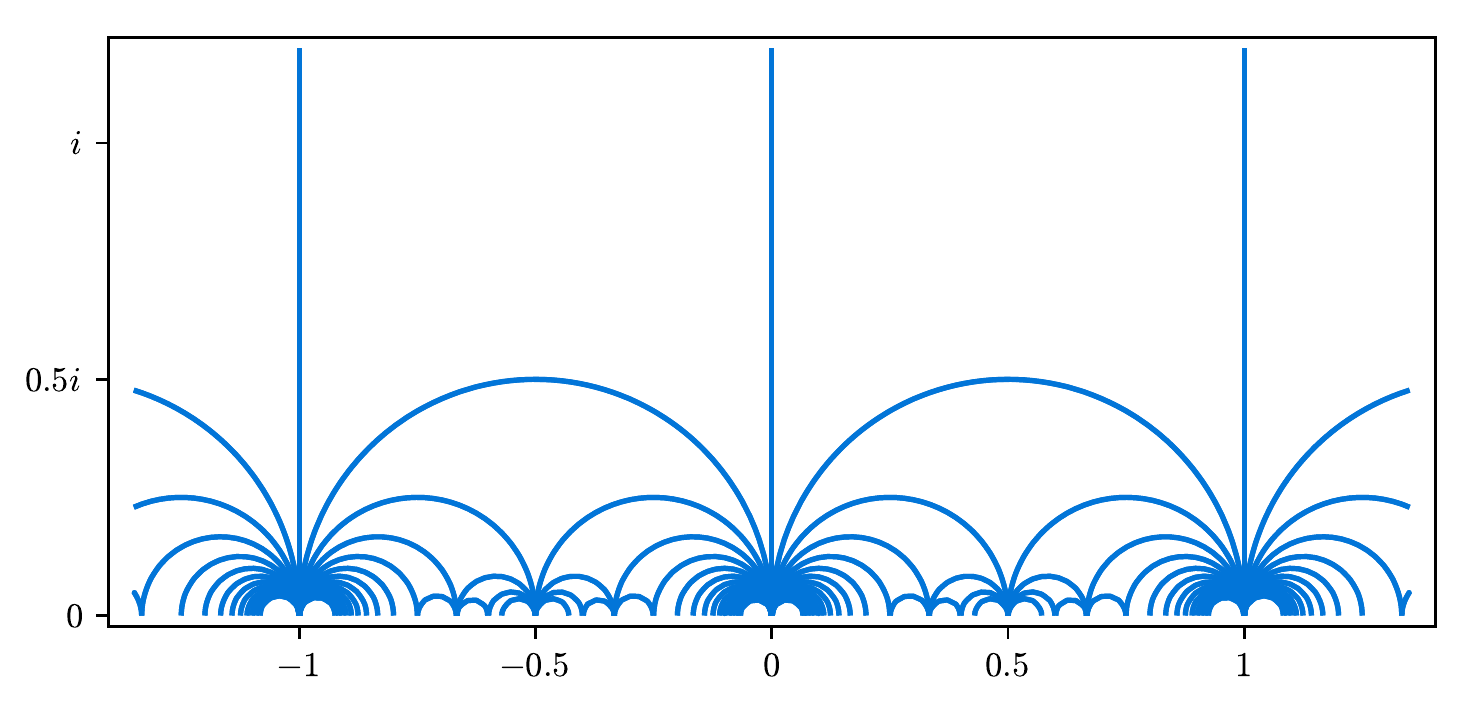}
		\caption{The set $E_1$ of singularities of $\mathcal{H}_{1-k,n}$.}
\end{figure}

We first determine the images of $\mathcal{H}_{1-k,n}(\tau)$ under the differential operators $\xi_{2-2k}$ and $\mathcal{D}^{2k-1}$. To state the result, we let $E_{2k}(\tau)$ be the usual normalized Eisenstein series of weight $2k$, and for $0 \leq n \leq 2k-2$ we let $R_{n}(\tau)$ be the weight $2k$ cusp form which is characterized by the fact that its Petersson inner product $\langle f,R_{n} \rangle$ with a cusp form $f(\tau)$ of weight $2k$ equals the $n$-th period
\begin{align}\label{definition periods}
r_{n}(f) := \int_{0}^{\infty}f(iy)y^{n}dy
\end{align}
of $f(\tau)$. Note that the periods satisfy the symmetry $r_n(f) = (-1)^k r_{2k-2-n}(f)$. The cusp form $R_n(\tau)$ is related to $\mathcal{H}_{1-k,n}(\tau)$ in the following way.

\begin{proposition}\label{proposition diffops} For $\tau \in \H \setminus \bigcup_{M \in \Gamma}M(i\R_{+})$ we have
\begin{align*}
\xi_{2-2k}\mathcal{H}_{1-k,n}(\tau) &= -R_{n}(\tau), \\
\mathcal{D}^{2k-1}\mathcal{H}_{1-k,n}(\tau) &= (-1)^{n+1}\frac{(2k-2)!}{(4\pi)^{2k-1}}R_{n}(\tau)  -\left((-1)^k\delta_{n = 0}+\delta_{n=2k-2}\right)\frac{(2k-2)!}{(2\pi)^{2k-1}}E_{2k}(\tau).
\end{align*}
\end{proposition}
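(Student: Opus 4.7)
The plan is to bring $\xi_{2-2k}$ and $\mathcal{D}^{2k-1}$ inside the integral defining $\mathcal{H}_{1-k,n}(\tau)$, reducing their action on $H_{k,k-1}(z,\tau)$ (in the $\tau$-variable) to their action on the Poincar\'e seed
\[
\phi(z,\tau) := v^{2k-1}(z-\tau)^{-1}(z-\bar\tau)^{1-2k}.
\]
A direct computation of $\partial_{\bar\tau}\phi$, followed by complex conjugation and multiplication by $2iv^{2-2k}$ (the two arising terms combining via $(z-\tau)+(\tau-\bar\tau)=z-\bar\tau$), gives
\[
\xi_{2-2k,\tau}\phi(z,\tau) = (2k-1)(\bar z-\tau)^{-2k}.
\]
For $\mathcal{D}^{2k-1}_\tau$ I would apply Leibniz to $\partial_\tau^{2k-1}[v^{2k-1}(z-\tau)^{-1}]$ (treating $(z-\bar\tau)^{1-2k}$ as $\tau$-constant); after factoring out $(z-\tau)^{-2k}$, the resulting Leibniz sum collapses via the binomial identity $\sum_{j=0}^{2k-1}\binom{2k-1}{j}(2iv)^{2k-1-j}(z-\tau)^{j} = (z-\bar\tau)^{2k-1}$, which cancels the $(z-\bar\tau)^{1-2k}$ factor in $\phi$ and produces
\[
\mathcal{D}^{2k-1}_\tau\phi(z,\tau) = -\frac{(2k-1)!}{(4\pi)^{2k-1}}(z-\tau)^{-2k}.
\]
Since $\xi_{2-2k,\tau}$ respects the slash operator $|_{2-2k,\tau}M$ and $\mathcal{D}^{2k-1}_\tau$ intertwines $|_{2-2k,\tau}M$ with $|_{2k,\tau}M$ by Bol's identity, these expressions extend termwise to $H_{k,k-1}(z,\tau)$.

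Setting $z=iy$, the Beta evaluation $\int_0^\infty y^n(y+a)^{-2k}\,dy = \tfrac{n!(2k-n-2)!}{(2k-1)!}\,a^{n+1-2k}$ (with $a=-i\tau$ in the $\xi$-case and $a=i\tau$ in the $\mathcal{D}^{2k-1}$-case) reduces, upon interchanging summation and integration, both computations to the Poincar\'e sum
\[
P_n(\tau) := \sum_{M\in\Gamma}\tau^{n+1-2k}|_{2k,\tau}M.
\]
To identify $P_n$ with $R_n$, I would invoke that $K_w(\tau) := \sum_M(\tau-\bar w)^{-2k}|_{2k,\tau}M$ is, up to the explicit constant $\tfrac{(-1)^k\pi}{(2k-1)2^{2k-3}}$, the Petersson reproducing kernel for weight-$2k$ cusp forms at $w$ (proven by unfolding $\langle f,K_w\rangle$ onto $\H$, with the factor $2$ arising from $\pm I\subset\Gamma$). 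Integrating the reproducing identity $\langle f,K_{iy}\rangle = c_k\,f(iy)$ against $y^n\,dy$ and applying $\langle f,R_n\rangle = r_n(f)$ identifies $\int_0^\infty K_{iy}(\tau)y^n\,dy$ as a specific scalar multiple of $R_n(\tau)$; tracking the constants yields $\xi_{2-2k}\mathcal{H}_{1-k,n}(\tau)=-R_n(\tau)$ for every $n$, and likewise gives the cuspidal contribution $(-1)^{n+1}\tfrac{(2k-2)!}{(4\pi)^{2k-1}}R_n(\tau)$ to $\mathcal{D}^{2k-1}\mathcal{H}_{1-k,n}(\tau)$.

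The main obstacle is the Eisenstein correction at the boundary exponents $n=0$ and $n=2k-2$: here $P_n$ fails to converge absolutely, and the swap of sum and integral misses an Eisenstein piece. For $n=2k-2$ I would reorganize $P_n$ along $\Gamma_\infty\backslash\Gamma$: the inner $\Gamma_\infty$-sum $\sum_b(\tau+b)^{-1}$ regularizes Hecke-style to $\pi\cot(\pi\tau) = -\pi i - 2\pi i\sum_{m\ge 1}q^m$, whose constant term $-\pi i$ propagates through the outer $\Gamma_\infty\backslash\Gamma$-unfolding to contribute a multiple $-2\pi i\cdot E_{2k}(\tau)$ beyond the cuspidal piece; tracking constants produces the stated coefficient $-\tfrac{(2k-2)!}{(2\pi)^{2k-1}}$. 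The case $n=0$ then follows from the reflection symmetry $\mathcal{H}_{1-k,n}(\tau) = (-1)^k\mathcal{H}_{1-k,2k-2-n}(\tau)$ (obtained by substituting $y\mapsto 1/y$ in the defining integral and using $H_{k,k-1}(i/y,\tau) = (-1)^k y^{2k}H_{k,k-1}(iy,\tau)$ from the weight-$2k$ modularity of $H_{k,k-1}$ in $z$), combined with the symmetry $R_0=(-1)^kR_{2k-2}$, which supplies the extra $(-1)^k$ in the Eisenstein coefficient.
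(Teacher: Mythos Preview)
Your approach is essentially correct and closely parallels the paper's proof, with some differences in presentation and one point that needs more care.

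\textbf{What matches.} Your direct computation of $\xi_{2-2k,\tau}\phi$ and $\mathcal{D}^{2k-1}_\tau\phi$ on the seed is equivalent to the paper's use of the ladder relations $L_{-2\ell,\tau}H_{k,\ell}=(k+\ell)H_{k,\ell+1}$ and $R_{-2\ell,\tau}H_{k,\ell}=(k-\ell)H_{k,\ell-1}$ together with Bol's identity; both routes land on $H_{k,k}$ and $H_{k,-k}$. For $0<n<2k-2$ your Beta evaluation and identification of $P_n$ with $R_n$ via the reproducing kernel is exactly Cohen's series representation, which the paper invokes as Proposition~\ref{proposition Rn}. Your handling of the $\xi$-image at the boundary exponents via the reproducing property of $K_w$ is in fact cleaner than the paper's: since $S_{2k}$ is finite-dimensional, writing $K_w=c_k^{-1}\sum_j\overline{f_j(w)}f_j$ for an orthonormal basis shows immediately that $\int_0^\infty K_{iy}(\tau)y^n\,dy$ converges and lies in $S_{2k}$, whence the inner-product argument forces it to be $\bar c_k R_n$ for all $n$. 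The paper instead treats $n\in\{0,2k-2\}$ separately via the Lipschitz formula and the Poincar\'e-series description $R_{2k-2}=2^{2k-1}\sum_{m\ge1}P_{2k,m}$.

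\textbf{What needs tightening.} The real difference is in the $\mathcal{D}^{2k-1}$-image at $n\in\{0,2k-2\}$. You propose to integrate term-by-term first, obtaining the divergent $P_{2k-2}$, then to group into $\Gamma_\infty$-cosets and regularize the inner sum $\sum_b(\tau+b)^{-1}$ to $\pi\cot(\pi\tau)$. This regularization is heuristic: the interchange of sum and integral is not justified, and it is not a priori clear that the Hecke-style value of the conditionally convergent cotangent sum reproduces the actual integral. The paper avoids this by working in the opposite order: it groups $H_{k,-k}(iy,\tau)$ into $\Gamma_\infty$-cosets \emph{before} integrating, so that the inner sum $\sum_{m\in\Z}(iy-\tau+m)^{-2k}$ is absolutely convergent, applies the Lipschitz formula $\sum_m(z+m)^{-2k}=\tfrac{(2\pi i)^{2k}}{(2k-1)!}\sum_{m\ge1}m^{2k-1}e(mz)$ to it (with analytic continuation in $\tau$ to handle the sign of $\Im(iy-\tau)$), and only then integrates in $y$. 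This yields $\tfrac{1}{1-e(\tau)}=1+\sum_{m\ge1}e(m\tau)$ rigorously, and the constant $1$ produces the Eisenstein piece while the tail gives the Poincar\'e sum for $R_{2k-2}$. Your cotangent picture is morally the same identity, but to make it a proof you should perform the coset grouping inside the integral rather than after, exactly as the paper does. The reduction of $n=0$ to $n=2k-2$ by the substitution $y\mapsto1/y$ is fine and matches the paper's use of the symmetry $\mathcal{H}_{1-k,n}=(-1)^k\mathcal{H}_{1-k,2k-2-n}$.
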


We refer the reader to Section~\ref{section proof proposition diffops} for the proof of Proposition~\ref{proposition diffops}. The above proposition implies that $\mathcal{H}_{1-k,n}(\tau)$ can be written as a sum of the holomorphic and non-holomorphic Eichler integrals of $R_{n}(\tau)$ (and of $E_{2k}(\tau)$ if $n = 0$ or $n = 2k-2$) and a locally polynomial part, which captures the singularities of $\mathcal{H}_{1-k,n}(\tau)$. Recall that the holomorphic and non-holomorphic Eichler integrals of a cusp form $f(\tau) = \sum_{n=1}^{\infty}c_{f}(n)e(n\tau)$ of weight $2k$ (with $e(x):= e^{2\pi i x}$ for $x \in \C$) are defined by
\begin{align}\label{eq eichler integrals}
\begin{split}
\mathcal{E}_{f}(\tau) &:= \frac{(-2\pi i)^{2k-1}}{(2k-2)!}\int_{\tau}^{i\infty}f(z)(z-\tau)^{2k-2}dz= \sum_{n\geq 1}\frac{c_{f}(n)}{n^{2k-1}}e(n\tau), \\
f^{*}(\tau) &:= (-2i)^{1-2k}\int_{-\overline{\tau}}^{i\infty}\overline{f(-\overline{z})}(z+\tau)^{2k-2}dz = -\sum_{n\geq 1}\frac{\overline{c_{f}(n)}}{(4\pi n)^{2k-1}}\Gamma(2k-1, 4\pi nv)e(-n\tau),
\end{split}
\end{align}
where $\Gamma(s,x):=\int_{x}^\infty e^{-t} t^{s-1}dt$ is the incomplete Gamma function. They satisfy
\begin{align}\label{eq diffops eichler}
\xi_{2-2k}f^{*}(\tau) = f(\tau), \qquad \mathcal{D}^{2k-1}f^{*}(\tau) = 0, \qquad \xi_{2-2k}\mathcal{E}_{f}(\tau) = 0, \qquad \mathcal{D}^{2k-1}\mathcal{E}_{f}(\tau) = f(\tau).
\end{align}
By using the series expansions on the right-hand sides of \eqref{eq eichler integrals}, we can extend the definitions of the Eichler integrals to the Eisenstein series $E_{2k}(\tau)$. The main result of this work is the following splitting of $\mathcal{H}_{1-k,n}(\tau)$.

\begin{theorem}\label{theorem splitting}
We have the decomposition
\begin{align}\begin{split}\label{split}
\mathcal{H}_{1-k,n}(\tau) &= \mathcal{P}_{1-k,n}(\tau) + (-1)^{n+1}\frac{(2k-2)!}{(4\pi )^{2k-1}}\mathcal{E}_{R_n}(\tau) - R_{n}^*(\tau) \\
 &\quad-((-1)^k\delta_{n=0}+\delta_{n=2k-2})\frac{(2k-2)!}{(2\pi)^{2k-1}}\mathcal{E}_{E_{2k}}(\tau),
 \end{split}
\end{align}
where $\mathcal{P}_{1-k,n}(\tau)$ is locally a polynomial on the connected components of $\H \setminus \bigcup_{M \in \Gamma}M(i\R_{+})$. It is explicitly given for $\tau\in\H \setminus \bigcup_{M \in \Gamma}M(i\R_{+})$ by
\begin{align*}
\mathcal{P}_{1-k,n}(\tau) &= r_n(E_{2k}) +\frac{(-i)^{n+1}}{n+1}\mathbb{B}_{n+1}(\tau)+\frac{i^{n+1}}{2k-1-n}\mathbb{B}_{2k-1-n}(\tau)
 \\
 &\quad +\frac{i^{1-n}}{2}\sum_{\substack{M = \left(\begin{smallmatrix}a & b \\ c & d \end{smallmatrix}\right)\in\Gamma \\ ac>0, \Re(M\tau)< 0}}(\tau^n -(-1)^n \tau^{2k-2-n})\Bigl |_{2-2k}M.
\end{align*}
Here $\mathbb{B}_{m}(\tau)$ is the $1$-periodic function on $\H$ which agrees with the Bernoulli polynomial $B_{m}(\tau)$ for $0 < u < 1$ and
$r_{n}(E_{2k})$ is the $n$-th period of the Eisenstein series $E_{2k}(\tau)$ given in \eqref{periodEisenstein}. \\
If there is a matrix $M\in\Gamma$ with $\Re(M\tau)=0$, then $\mathcal{P}_{1-k,n}(\tau)$ is given by the average value
\[
\lim_{\varepsilon\rightarrow 0+}\frac{1}{2}\big(\mathcal{P}_{1-k,n}(M^{-1}(M\tau - \varepsilon))+\mathcal{P}_{1-k,n}(M^{-1}(M\tau + \varepsilon))\big).
\] 
\end{theorem}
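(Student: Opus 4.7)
The plan is to combine Proposition~\ref{proposition diffops} with a direct evaluation of the defining integral~\eqref{definition curlyH}.

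The \emph{existence} of a locally polynomial part is almost formal. By~\eqref{eq diffops eichler}, the three Eichler-integral terms on the right of~\eqref{split} have exactly the images under $\xi_{2-2k}$ and $\mathcal{D}^{2k-1}$ that Proposition~\ref{proposition diffops} attributes to $\mathcal{H}_{1-k,n}$. Hence on every connected component of $\H\setminus\bigcup_{M}M(i\R_{+})$ the difference $\mathcal{H}_{1-k,n}(\tau)-(\text{Eichler terms})$ is annihilated by both operators; being holomorphic with vanishing $(2k-1)$-st $\tau$-derivative, it must be a polynomial in $\tau$ of degree at most $2k-2$. This settles the abstract splitting; the substance of the theorem is the explicit formula for this polynomial.

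To obtain that formula, I would unfold~\eqref{definition curlyH} by inserting~\eqref{eq Peterssons Poincare series} and interchanging the $\Gamma$-sum (acting on $\tau$) with the $y$-integration, so that the task reduces, for each $M = \bigl(\begin{smallmatrix}a&b\\c&d\end{smallmatrix}\bigr)\in\Gamma$, to evaluating
\[
\int_{0}^{\infty}\frac{\Im(M\tau)^{2k-1}\,y^{n}}{(iy-M\tau)(iy-\overline{M\tau})^{2k-1}}\,dy
\]
by contour integration in the $y$-plane. The integrand has a simple pole at $iy = M\tau$ and a pole of order $2k-1$ at $iy = \overline{M\tau}$; closing the contour and applying the residue theorem produces a main term depending regularly on $M\tau$, together with an extra residue at $M\tau$ that is picked up precisely when $\Re(M\tau)<0$, which is exactly the source of the jumps of $\mathcal{H}_{1-k,n}$ across the geodesic $M(i\R_{+})$.

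After this evaluation the sum over $\Gamma$ sorts itself into three bundles. The upper-triangular part ($c=0$) consists of lattice sums over $\tau\mapsto\tau+m$, $m\in\Z$, which via the classical Fourier identity $B_{m}(x)/m = -\sum_{\ell\neq 0}(2\pi i\ell)^{-m}e^{2\pi i\ell x}$ valid on $0<x<1$ collect into the two $1$-periodic Bernoulli terms $\mathbb{B}_{n+1}(\tau)$ and $\mathbb{B}_{2k-1-n}(\tau)$, and in addition contribute the Eisenstein Eichler integral $\mathcal{E}_{E_{2k}}$ when $n\in\{0,2k-2\}$. The regular residues at $\overline{M\tau}$ assemble, using~\eqref{eq eichler integrals} and the reproducing property of $R_{n}$, into $-R_{n}^{*}(\tau)+(-1)^{n+1}\tfrac{(2k-2)!}{(4\pi)^{2k-1}}\mathcal{E}_{R_n}(\tau)$. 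The extra residues at $M\tau$ for $c\neq 0$ form the finite slash-sum over $M$ with $ac>0$ and $\Re(M\tau)<0$; the constant $r_{n}(E_{2k})$ is pinned down by comparing expansions as $\Im\tau\to\infty$ on the component containing the cusp.

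The main obstacle is a combination of analytic and geometric bookkeeping. For $\ell = k-1$ the series~\eqref{eq Peterssons Poincare series} is only conditionally convergent in its small-$c$ terms, so interchanging sum and integral must be justified by a truncation procedure compatible with the Cauchy principal value of~\eqref{definition curlyH}. On the geometric side, determining which contour encloses the pole at $iy = M\tau$ amounts to locating $\tau$ relative to the $\Gamma$-translates of the imaginary axis; tracking this together with the $M\leftrightarrow -M$ symmetry of $\Gamma$ is what produces the sign condition $ac>0,\ \Re(M\tau)<0$, and the average-value prescription on $E_{1}$ is then automatic from the principal-value convention.
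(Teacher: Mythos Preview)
Your route is genuinely different from the paper's. The paper does \emph{not} unfold the defining integral. Instead it writes down the candidate right-hand side $\widetilde{\mathcal{H}}_{1-k,n}$ and proves directly that $\widetilde{\mathcal{H}}_{1-k,n}$ is modular of weight $2-2k$. The Eichler-integral pieces fail modularity by their period polynomials (equations~\eqref{holomorphiceichlertrafo}, \eqref{nonholomorphiceichlertrafo}, \eqref{EisenEichlerTrafo}), so one must check that $\mathcal{P}_{1-k,n}|_{2-2k}(I-S)$ cancels these exactly. This is where the paper brings in Kohnen--Zagier's explicit formulas for $r^{\pm}_{R_n}$ (Theorem~\ref{Periods R_n}): after rewriting $\mathcal{P}_{1-k,n}$ as a $\sgn$-decorated $\Gamma$-sum (Lemma~\ref{lemma alternative polynomial}), Lemma~\ref{lemma trafo P} shows that the $S$-obstruction of $\mathcal{P}_{1-k,n}$ is precisely the relevant half of the period polynomial of $R_n$. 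A separate residue computation (Lemma~\ref{jumps}), close in spirit to your contour-shifting step, shows that $\mathcal{H}_{1-k,n}$ and $\widetilde{\mathcal{H}}_{1-k,n}$ have the same jump singularities. The difference is then a global polynomial that is modular of negative weight, hence zero.

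Your first paragraph (the abstract splitting via Proposition~\ref{proposition diffops} and~\eqref{eq diffops eichler}) is correct and coincides with the paper's endgame. The rest is a direct-computation program that would bypass Kohnen--Zagier, but it has a gap at the sentence ``the regular residues at $\overline{M\tau}$ assemble, using~\eqref{eq eichler integrals} and the reproducing property of $R_{n}$, into $-R_n^{*}+\cdots$''. The reproducing property $\langle f, R_n\rangle = r_n(f)$ is a Petersson-inner-product statement and gives you no mechanism for turning a $\Gamma$-sum of order-$(2k-1)$ residues at the conjugate points $\overline{M\tau}$ into the specific combination of holomorphic and non-holomorphic Eichler integrals of $R_n$. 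Indeed, your own first paragraph has already pinned down the Eichler pieces, so the unfolding should only be asked to produce $\mathcal{P}_{1-k,n}$; but then you must isolate a degree-$\le 2k-2$ polynomial from a conditionally convergent $\Gamma$-sum after subtracting two transcendental Eichler series, which is essentially the same difficulty repackaged. The paper sidesteps this completely: the explicit $\mathcal{P}_{1-k,n}$ is \emph{posited}, and the Kohnen--Zagier period formulas certify that it has exactly the right $|_{2-2k}(I-S)$-behaviour to make the whole right-hand side modular. What your contour argument does cleanly is identify the jumps (the finite slash-sum over $ac>0$, $\Re(M\tau)<0$); what it does not do, as written, is produce the Bernoulli polynomials and the constant $r_n(E_{2k})$ on the nose without an equivalent of the period-polynomial input.
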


\begin{remark}Note that the sum in the second line of $\mathcal{P}_{1-k,n}(\tau)$ is finite for every $\tau\in\H$ and vanishes if $\Im(\tau) > \frac{1}{2}$. Indeed, the condition $ac > 0$ for $M = \left(\begin{smallmatrix}a & b \\ c & d \end{smallmatrix}\right) \in \Gamma$ implies that $M^{-1}(i\R_+)$ is a semi-circle of radius $\frac{1}{2ac}$ centered at the real line, and $\Re(M \tau) < 0$ means that $\tau$ lies in the bounded component of $\H\setminus M^{-1}(i\R_+)$. See also Figure 1.
\end{remark}

Our proof of the above theorem is quite different from the proof of the analogous splitting of the locally harmonic Maass form $\mathcal{F}_{1-k,D}(\tau)$ given in \cite{bringmannkanekohnen}. As the main step, we will employ an explicit formula for the period polynomial of $R_{n}(\tau)$ due to Kohnen and Zagier \cite{kohnenzagierrationalperiods} in order to directly show the modularity of the expression on the right-hand side of \eqref{split}. We refer the reader to Section~\ref{section proof theorem splitting} for the details of the proof. 

 \subsection{Periods of meromorphic modular forms} As an application of the above results, we study the periods of certain meromorphic modular forms. Namely, for a fixed positive definite quadratic form $P \in \mathcal{Q}_{d}$ of discriminant $d < 0$ we consider the function
 \begin{align}\label{definition fkP}
 f_{k,P}(z):= \frac{|d|^{k-\frac{1}{2}}}{\pi}\sum_{Q \in [P]}Q(z,1)^{-k},
 \end{align}
 where $[P]$ denotes the class of $P$ in $\mathcal{Q}_{d}/\Gamma$. We let 
 \[
 f_{k,d}(z) := \sum_{P \in \mathcal{Q}_{d}/\Gamma}f_{k,P}(z) = \frac{|d|^{k-\frac{1}{2}}}{\pi}\sum_{Q \in \mathcal{Q}_{d}}Q(z,1)^{-k}.
 \]
 Notice the analogy with the definition of the cusp form $f_{k,D}(\tau)$ in \eqref{eq fkD} for $D > 0$. The function $f_{k,P}(z)$ transforms like a modular form of weight $2k$ for $\Gamma$ and decays like a cusp form towards $i\infty$. Moreover, it is meromorphic on $\H$ and has poles of order $k$ precisely at the $\Gamma$-translates of the CM point $\tau_{P}$ defined by $P(\tau_{P},1) = 0$. 
 
 It was shown in \cite{anbs, anbms, ls} that certain linear combinations of geodesic cycle integrals of $f_{k,P}(z)$ are rational. Motivated by these results, in the present work we investigate the rationality of the periods
 \[
 r_{n}(f_{k,P}) = \int_{0}^{\infty}f_{k,P}(iy)y^{n}dy
 \]
 for $0 \leq n \leq 2k-2$, where the integral is defined using the Cauchy principal value as in \cite{ls}, Section~3.5, if a pole of $f_{k,P}$ lies on the positive imaginary axis. To get a first idea how these periods look, we consider the quadratic form $P = [1,1,1]$ of discriminant $d=-3$. In this case, there is only one class in $\mathcal{Q}_{-3}/\Gamma$, so we have $f_{k,[1,1,1]} = f_{k,-3}$. Numerical integration yields the following values of the periods of $f_{k,-3}$ for small values of $k$.
\begin{align*}
	 \renewcommand{\arraystretch}{1.2}
	\begin{array}{|c||c|c|c|c|c|c|c|}
	\hline 
	n & 0 & 1 & 2 & 3 & 4 & 5 & 6\\ 
	 \hline \hline
	 r_n(f_{2,-3}) & -2.05670 & -2 & -2.05670 & - & - & - & -\\
	 \hline
	 r_n(f_{3,-3}) & -3.25653 & -1.5 & 0 & 1.5 & 3.25652 & - &  -\\
	\hline 
	r_n(f_{4,-3}) & -6.76949 & -2.22222 & 0 & 0.66666 & 0 & -2.22222 & -6.76949\\
	\hline 
	r_n(f_{5,-3}) &  -15.65457 & -4.08333 &0 & 0.66666 & 0 & -0.66666 & 0 \\
	\hline
	r_n(f_{6,-3}) &  -38.31573 & -8.36729 &0 & 0.89204 & 0 & -0.48637 & 0\\
	\hline
	r_n(f_{7,-3}) &  -97.17273 & -18.33333 &0 & 1.4 & 0 & -0.5 & 0\\
	\hline
	\end{array}
\end{align*}
We notice that the even periods of $f_{k,-3}(z)$ for $0 < n < 2k-2$ seem to vanish and the odd periods for $k \neq 6$ appear to be rational numbers, but the outer periods $r_{0}(f_{k,-3})$ and $r_{2k-2}(f_{k,-3})$ and the odd periods of $f_{6,-3}(z)$ do not seem to be particularly nice. In order to state the result explaining these observations, following \cite{kohnenzagierrationalperiods} we introduce the functions
\[
f_{k,P}^{+}(z) := \frac{1}{2}\left(f_{k,P}(z)+f_{k,P'}(z)\right), \qquad f_{k,P}^{-}(z):= \frac{i}{2}\left(f_{k,P}(z)-f_{k,P'}(z) \right),
\]
where we put $P' := [a,-b,c]$ for $P = [a,b,c]$.

\begin{theorem}\label{theorem rationality}
\begin{enumerate}
	\item For $0<n<2k-2$ we have
	\[
	r_n\left(f_{k,P}^{\varepsilon}\right)=0,
	\]
	where $\varepsilon$ is the sign of $(-1)^{n}$.
	\item The outer periods of $f_{k,P}^{+}(z)$ are given by
	\[
	r_0(f_{k,P}^{+}) = (-1)^{k}r_{2k-2}(f_{k,P}^{+}) = -\frac{|d|^{k-\frac12}\zeta_P(k)}{2(2k-1)|\overline{\Gamma}_P|\zeta(2k)},
	\]
	where $\zeta_{P}(s) := \sum_{(x,y) \in \Z^2 \setminus \{(0,0)\}}P(x,y)^{-s}$ is the Epstein zeta function associated to $P$, and $\overline{\Gamma}_P$ denotes the stabilizer of $P$ in $\overline{\Gamma} := \Gamma/\{\pm 1\}$.
	\item Let $a_{n} \in \Q$ for odd $0 < n < 2k-2$ be coefficients such that 
	\[
	\sum_{\substack{0 < n < 2k-2 \\ n \text{ odd}}}a_{n}R_{n}(\tau) = 0
	\]
	in $S_{2k}$. Then the linear combination
	\[
	\sum_{\substack{0 < n < 2k-2 \\ n \text{ odd}}}a_{n}r_{n}(f_{k,P}) 
	\]
	of odd periods of $f_{k,P}(z)$ is rational. Similarly, if a rational linear combination of the cusp forms $R_n$ with even $0 < n < 2k-2$ vanishes in $S_{2k}$, then the corresponding linear combination of even periods of $f_{k,P}(z)$ is in $i\Q$.
\end{enumerate}
\end{theorem}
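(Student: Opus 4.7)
The plan is to prove Theorem~\ref{theorem rationality} by relating the period $r_n(f_{k,P})$ to the CM value $\mathcal{H}_{1-k,n}(\tau_P)$ at the CM point $\tau_P$ of $P$, and then applying the splitting of Theorem~\ref{theorem splitting}. The first, and most technical, step is to establish an identity of the shape
\[
r_n(f_{k,P}) \;=\; \alpha_k\,\mathcal{H}_{1-k,n}(\tau_P) \;+\; \beta_n(P),
\]
where $\alpha_k$ is an explicit constant and $\beta_n(P)$ an elementary Eisenstein-type correction. This should come from a residue/Stokes computation comparing the defining integral $\int_0^\infty H_{k,k-1}(iy,\tau_P)\,y^n\,dy$ with $r_n(f_{k,P})=\int_0^\infty f_{k,P}(iy)\,y^n\,dy$, bearing in mind that $H_{k,k-1}(z,\tau_P)$ has only simple poles in $z$ at $\Gamma \tau_P$ whereas $f_{k,P}(z)$ has poles of order $k$ there. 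The argument is modeled on the cycle-integral identification in \cite{ls} that expresses $\mathcal{F}_{1-k,D}$ as a trace of cycle integrals of $f_{k,P}$.

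Inserting Theorem~\ref{theorem splitting} into the right-hand side yields
\[
r_n(f_{k,P}) \;=\; \alpha_k\,\mathcal{P}_{1-k,n}(\tau_P) + (-1)^{n+1}\gamma_k\,\mathcal{E}_{R_n}(\tau_P) - \alpha_k\,R_n^*(\tau_P) + (\text{Eisenstein contribution}) + \beta_n(P),
\]
in which $\mathcal{P}_{1-k,n}(\tau_P)$ is rational by the explicit formula (involving $r_n(E_{2k})$, Bernoulli polynomial values, and finitely many slashes of $\tau^n$), and all transcendence is packed into $\mathcal{E}_{R_n}(\tau_P)$ and $R_n^*(\tau_P)$. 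Part~(3) is then immediate: if $\sum_{n\text{ odd}} a_n R_n = 0$ in $S_{2k}$, linearity of the Eichler integrals in the cusp form forces $\sum a_n \mathcal{E}_{R_n}(\tau_P) = \sum a_n R_n^*(\tau_P) = 0$, and since the prefactor $(-1)^{n+1}$ is constant on odd indices, the transcendental pieces drop out, leaving only rational terms. The even case is identical up to an overall factor of $i$, giving values in $i\Q$. Part~(1) follows by combining the decomposition with the identity $f_{k,P'}(z) = f_{k,P}(-z)$ and $\tau_{P'} = -\overline{\tau_P}$, which makes $f_{k,P}^+$ an even function and $f_{k,P}^-$ an odd function of $z$: extending the period integral to all of $i\R$ and closing the contour, combined with the decay of $f_{k,P}^\pm$ at $i\infty$ and modularity under $S$, forces the claimed interior vanishing for the matching parity of $n$. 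Part~(2) is a direct term-by-term computation: symmetrizing over $Q$ and $Q'$ the integrand of $f_{k,P}^+(iy)$ becomes manifestly real, the resulting $y$-integrals reduce to standard Beta integrals $\int_0^\infty(Ay^2+C)^{-k}dy$, and the sum over $\Gamma_P\backslash\Gamma \to \Z^2\setminus\{0\}$ reassembles into the Epstein zeta value $\zeta_P(k)$; the precise proportionality to $\zeta(2k)$ is fixed by the known value of $r_0(E_{2k})$.

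The main obstacle is the identification in the first step: constructing a clean residue formula that converts the simple poles of $H_{k,k-1}(z,\tau_P)$ into the order-$k$ poles of $f_{k,P}$, while correctly tracking the Cauchy principal value when $\tau_P \in \Gamma \cdot i\R_+$ and the Eisenstein correction $\beta_n(P)$ coming from the non-cuspidal part. Once this identity is secured, the three parts of the theorem follow from the splitting together with standard manipulations (parity considerations, Bernoulli polynomial identities, and Epstein zeta computations).
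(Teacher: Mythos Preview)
Your central identity is wrong, and the error propagates through the rest of the argument. You propose
\[
r_n(f_{k,P}) \;=\; \alpha_k\,\mathcal{H}_{1-k,n}(\tau_P) \;+\; \beta_n(P),
\]
but no such formula can hold: $H_{k,k-1}(z,\tau_P)$ has \emph{simple} poles in $z$ at $\Gamma\tau_P$, whereas $f_{k,P}(z)$ has poles of order $k$ there, so specializing $\tau$ to $\tau_P$ in $\mathcal{H}_{1-k,n}(\tau)$ does not recover $r_n(f_{k,P})$ up to an elementary correction. The paper resolves exactly this mismatch by observing that the raising operator in the $\tau$-variable raises the pole order: one has $(k-1)!\,H_{k,0}(z,\tau)=R_{2-2k,\tau}^{k-1}H_{k,k-1}(z,\tau)$ and $H_{k,0}(z,\tau_P)$ is literally a multiple of $f_{k,P}(z)$. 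This yields Proposition~\ref{proposition special value},
\[
r_n(f_{k,P}) \;=\; \frac{(-1)^{k-1}|d|^{\frac{k-1}{2}}}{2^{k-2}(k-1)!\,|\overline{\Gamma}_P|}\,R_{2-2k}^{k-1}\mathcal{H}_{1-k,n}(\tau_P),
\]
with no additive correction $\beta_n(P)$. You identified the right obstacle (``converts the simple poles \dots\ into the order-$k$ poles'') but the resolution is not a residue trick on the $z$-side; it is differentiation on the $\tau$-side.

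This matters downstream. You assert that $\mathcal{P}_{1-k,n}(\tau_P)$ is rational ``by the explicit formula'', but that is false: $\tau_P$ lies in an imaginary quadratic field, and neither $B_{n+1}(\tau_P)$ nor the slashed terms $\tau^n|_{2-2k}M$ evaluated at $\tau_P$ are rational in general. What is rational is $|d|^{\frac{k-1}{2}}R_{2-2k}^{k-1}\mathcal{P}_{1-k,n}(\tau_P)$, and this requires a nontrivial computation (Lemma~\ref{lemma raising taupower}) showing that $R_{2-2k}^{k-1}\tau^\ell$ evaluated at a CM point of discriminant $d$, times $|d|^{\frac{k-1}{2}}$, lands in $\Q$. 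Likewise, for Part~(3) the cancellation you want works only because the raised Eichler integrals satisfy $R_{2-2k}^{k-1}f^*(\tau)=-\tfrac{(2k-2)!}{(4\pi)^{2k-1}}\overline{R_{2-2k}^{k-1}\mathcal{E}_f(\tau)}$ (Proposition~\ref{proposition eichler integral relations}); at the level of plain values $\mathcal{E}_{R_n}(\tau_P)$ and $R_n^*(\tau_P)$ the two transcendental pieces are not obviously related and do not combine as you claim. Your Part~(1) sketch (closing a contour) is also not viable as stated, since $f_{k,P}^\pm$ has genuine poles along $i\R_+$ whenever $\tau_P\in\Gamma\cdot i\R_+$; the paper instead deduces Part~(1) from the reality/imaginarity of $r_n(f_{k,P})$ forced by the raised splitting. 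Your direct Epstein computation for Part~(2) is plausible in spirit, but note that in the paper the Epstein value emerges from the raised Eisenstein Eichler integral $R_{2-2k}^{k-1}\mathcal{E}_{E_{2k}}(\tau_P)$, consistently with the same framework.
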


The proof of Theorem~\ref{theorem rationality} will be given in Section~\ref{section proof rationality}. A nice feature of the proof is that it also yields an exact rational formula for the linear combinations of periods of $f_{k,P}(z)$ considered in item (3) of Theorem~\ref{theorem rationality}, compare Theorem~\ref{theorem rn formula} below. In contrast, the algebraic nature of the special values $\zeta_P(k)$ appearing in the outer periods of $f_{k,P}^{+}(z)$ is more mysterious. For example, for $k = 3$ and $P = [1,1,1]$ we have $\zeta_P(3) = 6\zeta(3)L(\chi_{-3},3) = \frac{8\pi^3}{27\sqrt{3}}\zeta(3)$ (compare \cite{zagierzetafunctions}, Proposition 3), which is expected to be transcendental.

We will now give an example to illustrate item (3) of Theorem~\ref{theorem rationality}.

\begin{example}
	We first notice that for $k = 2,3,4,5,$ and $7$, the space of cusp forms of weight $2k$ is trivial. Hence, Theorem~\ref{theorem rationality} shows that in these cases the odd periods of $f_{k,P}(z)$ are rational and the even periods for $0 < n < 2k-2$ are in $i\Q$ for all positive definite forms $P$, in accordance with the numerical values given in the above table. More generally, Cohen \cite{cohen} has found relations between the cusp forms $R_{n}(\tau)$ in any weight, for example
	\[
	\sum_{\substack{0 < n \leq j-1 \\ n \text{ odd}}}\binom{j}{n}(-1)^{\frac{n-1}{2}}R_{n}(\tau)+\sum_{\substack{j \leq n < 2k-2 \\ n \text{ odd}}}\binom{2k-2-j}{n-j}(-1)^{\frac{n-1}{2}}R_{n}(\tau) = 0
	\]
	for all $0 \leq j \leq 2k-2$. Relations of this kind can be proved using the Eichler-Shimura isomorphism. From Theorem~\ref{theorem rationality} we obtain that the linear combinations
	\[
	\sum_{\substack{0 < n \leq j-1 \\ n \text{ odd}}}\binom{j}{n}(-1)^{\frac{n-1}{2}}r_{n}(f_{k,P})+\sum_{\substack{j \leq n < 2k-2 \\ n \text{ odd}}}\binom{2k-2-j}{n-j}(-1)^{\frac{n-1}{2}}r_{n}(f_{k,P})
	\]
	of odd periods of $f_{k,P}(z)$ are rational for all $0 \leq j \leq 2k-2$ and all $P$. For instance, for $k = 6, d = -3$, and $j = 4$, we find that the linear combination
	\[
	10r_{1}(f_{6,-3}) - 24r_{3}(f_{6,-3}) + 6r_{5}(f_{6,-3})
	\]
	is rational. Indeed, plugging in the numerical values given in the above table, we find that this linear combination of periods is numerically close to the integer $-108$. Using our exact formula for the periods of $f_{k,P}(z)$ given in Theorem~\ref{theorem rn formula} below one can show that this really is the correct value.
\end{example}

We will explain the results from Theorem~\ref{theorem rationality} by using the following connection between the periods of $f_{k,P}(z)$ and special values of (derivatives of) the locally harmonic Maass forms $\mathcal{H}_{1-k,n}(\tau)$.

\begin{proposition}\label{proposition special value}
	We have
	\[
	r_{n}(f_{k,P}) = \frac{(-1)^{k-1}|d|^\frac{k-1}{2}}{2^{k-2}(k-1)!|\overline{\Gamma}_P|}R_{2-2k}^{k-1}\mathcal{H}_{1-k,n}(\tau_{P}),
	\]
	where $\tau_{P} \in \H$ denotes the CM point characterized by $P(\tau_{P},1) = 0$, and $R_{2-2k}^{k-1} := R_{-2}\circ \dots \circ R_{2-2k}$ is an iterated version of the Maass raising operator $R_{\kappa} := 2i\frac{\partial}{\partial \tau}+\kappa v^{-1}$.
\end{proposition}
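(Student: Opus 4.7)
The plan is to move the raising operator $R_{2-2k}^{k-1}$ (acting in $\tau$) inside the integral defining $\mathcal{H}_{1-k,n}(\tau)$, to identify the resulting kernel at $\tau=\tau_P$ with an explicit constant multiple of $f_{k,P}(z)$, and then to match constants. Since $H_{k,k-1}(z,\tau)$ and its $\tau$-derivatives converge locally uniformly on $\H\setminus\Gamma z$, the exchange of integral and differential operator is legitimate and yields
\[
R_{2-2k}^{k-1}\mathcal{H}_{1-k,n}(\tau_P) = \frac{(2i)^{2k-2}}{2\pi}\int_0^{\infty}\bigl[R_{2-2k}^{k-1}H_{k,k-1}(iy,\tau)\bigr]_{\tau=\tau_P}\,y^{n}\,dy.
\]

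The crux of the argument is the explicit evaluation
\[
R_{2-2k}^{k-1}\Bigl[v^{2k-1}(z-\tau)^{-1}(z-\overline{\tau})^{1-2k}\Bigr] = \frac{(k-1)!\,v^{k}}{(z-\tau)^{k}(z-\overline{\tau})^{k}},
\]
which I would establish by inducting on $j$ to prove the more general statement that $R_{2-2k}^{j}$ of the same summand equals $j!\,v^{2k-1-j}(z-\tau)^{-j-1}(z-\overline{\tau})^{-(2k-1-j)}$ for $0\le j\le k-1$. The inductive step uses $R_\kappa = 2i\partial_\tau + \kappa v^{-1}$; the two terms that arise recombine thanks to the elementary identity $(z-\tau)+2iv = z-\overline{\tau}$, which simultaneously shifts one power of $(z-\overline{\tau})$ to $(z-\tau)$ and collapses the numerical coefficients into $(j+1)!$. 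This is the only nontrivial step; everything else is bookkeeping.

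Since Maass raising intertwines with the slash operator as $R_{\kappa+2}(f|_\kappa M) = (R_\kappa f)|_{\kappa+2}M$, summing the identity over $M\in\Gamma$ and using that weight-$0$ slashing is just pullback gives
\[
R_{2-2k}^{k-1}H_{k,k-1}(z,\tau) = (k-1)!\sum_{M\in\Gamma}\frac{(\Im\, M\tau)^{k}}{(z-M\tau)^{k}(z-M\overline{\tau})^{k}}.
\]
Evaluating at the CM point $\tau = \tau_P$, writing $M\tau_P = \tau_Q$, and using the identities $(z-\tau_Q)(z-\overline{\tau_Q}) = Q(z,1)/a_Q$ and $a_Q\,\Im\tau_Q = \sqrt{|d|}/2$, each summand collapses to $|d|^{k/2}\,2^{-k}\,Q(z,1)^{-k}$. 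As $M$ ranges over $\Gamma$ the form $Q$ ranges over $[P]$ with each class appearing $|\Gamma_P| = 2|\overline{\Gamma}_P|$ times, so
\[
R_{2-2k}^{k-1}H_{k,k-1}(z,\tau_P) = \frac{\pi(k-1)!\,|\overline{\Gamma}_P|}{2^{k-1}|d|^{(k-1)/2}}\,f_{k,P}(z).
\]
Substituting this into the integral representation above and simplifying using $(2i)^{2k-2} = (-1)^{k-1}2^{2k-2}$ produces the claimed formula for $r_{n}(f_{k,P})$.
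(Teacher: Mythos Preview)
Your proof is correct and follows essentially the same route as the paper. The paper packages your termwise induction as the recursion $R_{-2\ell,\tau}H_{k,\ell}=(k-\ell)H_{k,\ell-1}$ (stated earlier in the paper), giving $R_{2-2k}^{k-1}H_{k,k-1}=(k-1)!\,H_{k,0}$, and then evaluates $H_{k,0}(z,\tau_P)$ using the $z$-averaging form of the Poincar\'e series rather than your $\tau$-averaging form; these are the same series, so the two computations coincide.
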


The proof of the above proposition can be found in Section~\ref{section proof rationality}. In Theorem~\ref{theorem rn formula} below we will apply the iterated raising operator $R_{2-2k}^{k-1}$ to the splitting of $\mathcal{H}_{1-k,n}(\tau)$ from Theorem~\ref{theorem splitting}. Together with Proposition~\ref{proposition special value} we obtain an explicit formula for the periods $r_{n}(f_{k,P})$, which we will then use to prove Theorem~\ref{theorem rationality}.

Eventually, we remark that the methods of this work can be used to study the rationality of periods of certain linear combinations of the meromorphic modular forms $f_{k,P}(z)$, similar to \cite{ls}. For example, in analogy to Theorem~2.4 from \cite{ls}, one can show that certain linear combinations of Hecke-translates of $f_{k,P}(z)$ have rational periods.

We start with a section on the necessary preliminaries. In the remaining sections, we give the proofs of the above results.

\section{Preliminaries}\label{section preliminaries}

\subsection{Derivatives of Eichler integrals}

The holomorphic and the non-holomorphic Eicher integrals defined in \eqref{eq eichler integrals} are related by the iterated raising operator $R_{2-2k}^{k-1} = R_{-2}\circ \dots \circ R_{2-2k}$, with $R_{\kappa} = 2i\frac{\partial}{\partial \tau} + \kappa v^{-1}$, as follows.

\begin{proposition}\label{proposition eichler integral relations}
		For any holomorphic modular form $f \in M_{2k}$ we have the relation
		\[
		R_{2-2k}^{k-1}f^*(\tau) = -\frac{(2k-2)!}{(4\pi)^{2k-1}}\overline{R_{2-2k}^{k-1}\mathcal{E}_f(\tau)}.
		\]
\end{proposition}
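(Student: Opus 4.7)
The plan is to verify the identity by matching Fourier expansions on both sides, after applying the iterated raising operator $R^{k-1}:=R_{2-2k}^{k-1}$ (which sends weight $2-2k$ to weight $0$). The starting point is the elementary identity
\[
\Gamma(2k-1, 4\pi n v)\,e(-n\tau) \;=\; (2k-2)!\,e(-n\bar\tau)\sum_{j=0}^{2k-2}\frac{(4\pi nv)^{j}}{j!},
\]
obtained from the finite closed form of $\Gamma(2k-1,\,\cdot\,)$ combined with $e^{-4\pi nv}e(-n\tau)=e(-n\bar\tau)$. Substituting this into the definition of $f^*$ in \eqref{eq eichler integrals} expresses $f^*(\tau)$ as a double sum (over $n\geq 1$ and $0\leq j\leq 2k-2$) of basis functions of the form $v^{j}e(-n\bar\tau)$.

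The key computational observation is that $R_\kappa$ acts diagonally on such basis functions. Since $\frac{\partial}{\partial\tau}e(-n\bar\tau)=0$ (the function is antiholomorphic in $\tau$) and $2i\frac{\partial}{\partial\tau}v^{j}=jv^{j-1}$, one checks directly that $R_\kappa\bigl(v^{j}e(-n\bar\tau)\bigr)=(j+\kappa)v^{j-1}e(-n\bar\tau)$. Iterating through the weights $2-2k, 4-2k,\dots,-2$ gives
\[
R^{k-1}\bigl(v^{j}e(-n\bar\tau)\bigr) \;=\; \Bigl(\prod_{\ell=0}^{k-2}(j+\ell+2-2k)\Bigr)v^{j-k+1}e(-n\bar\tau).
\]
Crucially, this product contains a zero factor precisely when $k\leq j\leq 2k-2$. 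Hence $R^{k-1}$ kills those contributions in the expansion of $f^*$, leaving an explicit finite sum over $0\leq j\leq k-1$ whose coefficients can be read off.

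For the right-hand side, I would apply the standard iterated-raising formula $R_\kappa^{k-1}g=\sum_{j=0}^{k-1}\binom{k-1}{j}\bigl(\prod_{\ell=j}^{k-2}(\kappa+\ell)\bigr)v^{j-k+1}(2i\frac{\partial}{\partial\tau})^j g$ to $g=e(n\tau)$, using $(2i\frac{\partial}{\partial\tau})^j e(n\tau)=(-4\pi n)^j e(n\tau)$. Taking the complex conjugate and using the relation $\overline{e(n\tau)}=e(-n\bar\tau)$ writes $\overline{R^{k-1}\mathcal{E}_f(\tau)}$ in the same basis $\{v^{j-k+1}e(-n\bar\tau)\}_{0\leq j\leq k-1}$. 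The combinatorial identity
\[
\binom{k-1}{j}\prod_{\ell=j}^{k-2}(\ell+2-2k) = (-1)^{k-1-j}\frac{(2k-2-j)!}{j!\,(k-1-j)!},
\]
which rewrites the product of negative integers as a ratio of factorials, shows that both sides have exactly the same coefficient of each $v^{j-k+1}e(-n\bar\tau)/(4\pi n)^{2k-1-j}$, and the constant $-(2k-2)!/(4\pi)^{2k-1}$ emerges naturally when matching powers of $4\pi n$.

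The main obstacle is purely bookkeeping: tracking factors of $i$, the signs $(-1)^{k-1-j}$ produced by the product of negative integers, the powers of $4\pi n$, and the asymmetric conjugation relation $\overline{e(n\tau)}=e(-n\bar\tau)$ (not $e(n\bar\tau)$). No deeper input is needed beyond the two explicit computations above, and the same argument applies to $f\in M_{2k}$ including the Eisenstein series $E_{2k}$, since both $\mathcal{E}_f$ and $f^*$ are defined there by the same Fourier series.
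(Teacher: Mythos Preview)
Your proposal is correct and essentially reduces the claim, as the paper does, to the termwise identity
\[
R_{2-2k}^{k-1}\bigl(\Gamma(2k-1,4\pi nv)e(-n\tau)\bigr)=(2k-2)!\,\overline{R_{2-2k}^{k-1}e(n\tau)},
\]
but you prove this step by a different and more elementary route. The paper recognises both sides as special values $\mathcal{W}_{2-2k,k}(\pm 4\pi nv)e(\pm nu)$ of a Whittaker-type function and then invokes a raising formula for $R_\kappa\bigl(\mathcal{W}_{\kappa,s}(4\pi m v)e(mu)\bigr)$ derived from two identities in Abramowitz--Stegun; this packages the induction cleanly but imports external special-function machinery. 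Your argument instead exploits that $2k-1$ is a positive integer: the finite expansion of $\Gamma(2k-1,\cdot)$ reduces $f^*$ to a sum of monomials $v^{j}e(-n\bar\tau)$, on which the raising operator acts by the scalar $j+\kappa$, so the vanishing of the product $\prod_{\ell=0}^{k-2}(j+\ell+2-2k)$ for $k\le j\le 2k-2$ and the combinatorial identity you state handle everything directly. The trade-off is that the paper's computation would generalise to non-integral weights or other Whittaker parameters with no change, whereas yours is tied to the integrality of $2k-1$; in return, yours is entirely self-contained and makes the cancellation mechanism transparent. Either way, only careful bookkeeping remains.
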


\begin{proof}
	Using the Fourier expansions of the Eichler integrals given in \eqref{eq eichler integrals} we see that the claim is equivalent to
\begin{align}\label{eq eichler integrals simplified}
R_{2-2k}^{k-1}\left(\Gamma(2k-1,4\pi n v)e(-n\tau)\right) = (2k-2)! \overline{R_{2-2k}^{k-1}e(n\tau)}
\end{align}
for all $n \geq 1$. Following \cite{bruinierhabil}, Section 1.3, we consider the function
\[
\mathcal{W}_{\kappa,s}(y) := |y|^{-\kappa/2}W_{\frac{\kappa}{2}\sgn(y),s-\frac{1}{2}}(|y|), \qquad (\kappa \in \R, \ s \in \C, \ y \in \R \setminus \{0\}),
\]
where $W_{\nu,\mu}(y)$ is the usual $W$-Whittaker function. At $s = 1-\frac{\kappa}{2}$ it simplifies to
\[
\mathcal{W}_{\kappa,1-\frac{\kappa}{2}}(y) = \begin{cases}
e^{-y/2}, & \text{if $y > 0$}, \\
e^{-y/2}\Gamma(1-\kappa,|y|), & \text{if $y < 0$}.
\end{cases}
\]
In particular, \eqref{eq eichler integrals simplified} is equivalent to
\begin{align}\label{eq eichler integrals simplified 2}
R_{2-2k}^{k-1}\left(\mathcal{W}_{2-2k,k}(-4\pi n v)e(-nu) \right) = (2k-2)! \overline{R_{2-2k}^{k-1}\left(\mathcal{W}_{2-2k,k}(4\pi n v)e(nu)\right)}.
\end{align}
On the other hand, using (13.4.33) and (13.4.31) in \cite{abramowitz}, we obtain the formula
\begin{align*}
&R_{\kappa}\left(\mathcal{W}_{\kappa,s}(4\pi m v)e(mu)\right) \\
&\quad = \begin{cases}
-4\pi |m|\left(s+\frac{\kappa}{2}\right)\left(s-\frac{\kappa}{2}-1\right)\mathcal{W}_{\kappa+2,s}(4\pi m v)e(mu), & \text{if $m < 0$}, \\
-4\pi |m|\mathcal{W}_{\kappa+2,s}(4\pi m v)e(mu), & \text{if $m > 0$},
\end{cases}
\end{align*}
for $u+iv \in \H$ and $m \in \R \setminus \{0\}$. This easily implies \eqref{eq eichler integrals simplified 2} and finishes the proof.
\end{proof}

\subsection{Periods of cusp forms and non-cusp forms}\label{section prelims periods}

We recall from \cite{kohnenzagierrationalperiods} that the period polynomial of a cusp form $f \in S_{2k}$ is defined by
\[
r_f(\tau) := \int_0^{i\infty} f(z)(z - \tau)^{2k-2}dz = \sum_{n = 0}^{2k-2}i^{-n+1}\binom{2k-2}{n}r_n(f)\tau^{2k-2-n}.
\]
The even period polynomial $r_f^+(\tau)$ of $f$ is defined as $-i$ times the even part of $r_f(\tau)$, and the odd period polynomial $r_f^-(\tau)$ of $f$ is the odd part of $r_f(\tau)$, such that $r_f(\tau) = r_f^-(\tau)+ir_f^+(\tau)$. It is well-known that the errors of modularity of the Eichler integrals of $f$ can be expressed in terms of the period function of $f$ as
\begin{align}\label{holomorphiceichlertrafo}
	\mathcal{E}_{f}(\tau)\Bigl|_{2-2k}(I-S) &= \frac{(-2\pi i )^{2k-1}}{(2k-2)!}r_{f}(\tau)
\end{align}
and
\begin{align}\label{nonholomorphiceichlertrafo}
	f^*(\tau)\Bigl|_{2-2k}(I-S) &= (-2i)^{1-2k}r_{f}^c(\tau),
\end{align}
	where $r_f^c(\tau) := \overline{r_f(\overline{\tau})}$ is the polynomial whose coefficients are the complex conjugates of the coefficients of $r_f(\tau)$.

For $0 \leq n \leq 2k-2$ the periods of a (not necessarily cuspidal) modular form $f \in M_{2k}$ are defined by 
\[
r_n(f) := \frac{n!}{(2\pi)^{n+1}}L_f(n+1),
\]
where $L_f(s)$ denotes the usual $L$-function associated to $f$. For a cusp form $f \in S_{2k}$ this agrees with the definition~\eqref{definition periods}. Note that the functional equation of the $L$-function of $f$ implies the symmetry $r_n(f) = (-1)^k r_{2k-2-n}(f)$. 

The periods of the normalized Eisenstein series $E_{2k}$ are given by
\begin{align}\label{periodEisenstein}
r_{n}(E_{2k}) = \begin{dcases}
-\frac{\pi \zeta(2k-1) }{(2k-1)\zeta(2k)}, & \text{if $n=0$,}\\
0, & \text{if $0<n<2k-2$ is even,}\\
(-1)^{\frac{n-1}{2}}\frac{2k B_{n+1}B_{2k-1-n}}{B_{2k}(n+1)(2k-1-n)}, & \text{if $0<n<2k-2$ is odd,}\\
(-1)^{k-1}\frac{\pi \zeta(2k-1) }{(2k-1)\zeta(2k)}, & \text{if $n =2k-2$,}
\end{dcases}
\end{align}
 compare p.240 of \cite{kohnenzagierrationalperiods}. Following \cite{zagierperiods}, we define the corresponding period function by
\begin{equation}\label{EisenSplit}
r_{E_{2k}}(\tau) :=\frac{1}{2k-1}\left(\tau^{2k-1}+\frac{1}{\tau}\right) + \sum_{n=0}^{2k-2}i^{1-n}\binom{2k-2}{n}r_{n}(E_{2k})\tau^{2k-2-n} = r^-_{E_{2k}}(\tau)+ir^+_{E_{2k}}(\tau)
\end{equation}
with 
\begin{align}\label{EisenEven}
r^+_{E_{2k}}(\tau) := r_{0}(E_{2k})(\tau^{2k-2}-1)
\end{align}
and 
\begin{align}\label{EisenPeriodOddPoly}
r^-_{E_{2k}}(\tau)
:=\frac{2k(2k-2)!}{B_{2k}}\sum_{\substack{-1\leq n\leq 2k-1, \\ \text{$n$ odd}}}\frac{B_{n+1}B_{2k-1-n}}{(n+1)!(2k-1-n)!}\tau^{2k-2-n}.
\end{align}
Then we also have 
\begin{align}\label{EisenEichlerTrafo}
\mathcal{E}_{E_{2k}}(\tau)\Bigl|_{2-2k}(I-S) = \frac{(-2\pi i )^{2k-1}}{(2k-2)!}r_{E_{2k}}(\tau).
 \end{align}

\subsection{The cusp forms $R_{n}(\tau)$}\label{section period polynomial Rn}

Recall that $R_n(\tau)$ denotes the unique cusp form of weight $2k$ for $\Gamma$ which satisfies the inner product formula $\langle f,R_n \rangle = r_n(f)$ for every cusp form $f(\tau) \in S_{2k}$. We will need the following well-known series representation of $R_n(\tau)$.

	\begin{proposition}\label{proposition Rn}
		For $0 < n < 2k-2$ the cusp form $R_n(\tau) \in S_{2k}$ is given by
		\[
R_{n}(\tau) = \frac{2^{2k-3}}{i^{2k-1-n}\binom{2k-2}{n}\pi}\sum_{M \in \Gamma}\tau^{-n-1}\Bigl|_{2k}M.
		\]
		Moreover, for $n = 0$ or $n = 2k-2$ it can be constructed as
		\[
		R_{2k-2}(\tau) = (-1)^k R_{0}(\tau) = 2^{2k-1}\sum_{m=1}^\infty P_{2k,m}(\tau),
		\]
		where $P_{2k,m}(\tau) := \sum_{M \in \Gamma_\infty \backslash \Gamma}e(m\tau)|_{2k}M$ with $\Gamma_\infty := \{\pm\left(\begin{smallmatrix}1 & n \\ 0 & 1 \end{smallmatrix} \right): n\in \Z\}$ is the usual cuspidal Poincar\'e series of weight $2k$.
	\end{proposition}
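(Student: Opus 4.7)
My plan is to identify the cusp form $R_n$ via the reproducing property $\langle f,R_n\rangle = r_n(f)$ for all $f\in S_{2k}$. Since this characterises $R_n$ uniquely, it suffices to compute $\langle f, g\rangle$ for the candidate $g$ on the right-hand side and match the result with $r_n(f)$.

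For the range $0<n<2k-2$, put $g_n(\tau):=\sum_{M\in\Gamma}\tau^{-n-1}|_{2k}M$. Writing out the slash operator, the summand equals $(a\tau+b)^{-n-1}(c\tau+d)^{-(2k-n-1)}$, and absolute convergence as well as cuspidality follow from Petersson's classical analysis of this series. Because $-I$ acts trivially on $\H$, the standard Rankin--Selberg unfolding yields
\[
\langle f,g_n\rangle = 2\int_{\H} f(\tau)\,\bar\tau^{-n-1}\,v^{2k-2}\,du\,dv.
\]
I would then compute this integral by Fubini. Inserting the Fourier expansion $f(\tau)=\sum_{m\geq 1}a_m(f)\,e^{2\pi i m\tau}$ and evaluating the $u$-integral by residue calculus, closing the contour in the upper half $u$-plane where $e^{2\pi i m u}$ decays for $m>0$ and picking up the pole of order $n+1$ at $u=iv$, produces
\[
\int_{-\infty}^\infty e^{2\pi i m u}(u-iv)^{-n-1}du = \frac{2\pi i\,(2\pi i m)^n}{n!}\,e^{-2\pi m v}.
\]
The remaining integral in $v$ is a Gamma integral contributing $\frac{(2k-2)!}{(4\pi m)^{2k-1}}$ per Fourier mode, so the sum over $m$ collapses to $L_f(2k-1-n)$ times an explicit constant. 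One then converts $L_f(2k-1-n)$ into $r_n(f)$ via the formula $r_n(f)=\frac{n!}{(2\pi)^{n+1}}L_f(n+1)$ together with the functional equation of $\Lambda_f(s)=(2\pi)^{-s}\Gamma(s)L_f(s)$. Carefully tracking the powers of $i$ and $2$ should produce precisely the normalisation $\frac{2^{2k-3}}{i^{2k-1-n}\binom{2k-2}{n}\pi}$ claimed in the proposition.

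For the boundary cases $n\in\{0,2k-2\}$ the Petersson series $g_n$ no longer converges absolutely, so I would instead test the proposed formula directly against cusp forms using the classical Petersson coefficient identity $\langle f,P_{2k,m}\rangle=\frac{(2k-2)!}{(4\pi m)^{2k-1}}a_m(f)$. Summing over $m\geq 1$ gives
\[
\Bigl\langle f,\,2^{2k-1}\sum_{m\geq1}P_{2k,m}\Bigr\rangle = \frac{(2k-2)!}{(2\pi)^{2k-1}}L_f(2k-1),
\]
while a direct expansion of $r_{2k-2}(f)=\int_0^\infty f(iy)y^{2k-2}dy$ as a Gamma integral shows the right-hand side equals $r_{2k-2}(f)$. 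Hence $R_{2k-2}(\tau)=2^{2k-1}\sum_m P_{2k,m}(\tau)$, and the symmetry $r_n(f)=(-1)^k r_{2k-2-n}(f)$ applied with $n=0$ immediately gives $R_{2k-2}=(-1)^k R_0$.

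The main technical obstacle is the residue computation combined with the bookkeeping of constants through the functional equation in the first part: the signs and powers of $i$ must line up exactly, and the interchange of summation and integration in the unfolding step requires the absolute convergence of $g_n$ to be established first, which is the delicate point at the ends of the allowed range for $n$.
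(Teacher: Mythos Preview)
Your approach is correct and, for the boundary cases $n\in\{0,2k-2\}$, essentially identical to the paper's: both compute $\langle f,2^{2k-1}\sum_m P_{2k,m}\rangle$ via the Petersson coefficient formula, identify the result with $r_{2k-2}(f)$, and invoke the symmetry $r_n(f)=(-1)^k r_{2k-2-n}(f)$ for $n=0$. The only nuance the paper makes explicit and you do not is that the termwise pairing only establishes \emph{weak} convergence of $2^{2k-1}\sum_m P_{2k,m}$ to $R_{2k-2}$; the paper then notes that $S_{2k}$ is finite-dimensional, so weak convergence implies equality.

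For $0<n<2k-2$ the paper gives no argument at all and simply cites Cohen and Kohnen--Zagier. Your unfolding-plus-residue computation is exactly the classical proof found in those references, so you are supplying what the paper outsources. The constants do work out: after the unfolding and the two integrals you obtain $\langle f,g_n\rangle = \frac{\pi\,i^{n+1}(-1)^k}{2^{2k-3}}\binom{2k-2}{n}r_n(f)$, and since $(-1)^k i^{n+1}=i^{-(2k-1-n)}$ and the Petersson pairing is conjugate-linear in the second slot, this yields precisely the stated normalising constant for $R_n$.
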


	\begin{proof}
		The above series representation of $R_n(\tau)$ for $0 < n < 2k-2$ was first given by Cohen in \cite{cohen}. We also refer the reader to the Lemma in Section 1.2 of \cite{kohnenzagierrationalperiods} for a proof.
		
		For $n = 2k-2$ and $f(\tau) = \sum_{m=1}^\infty c_f(m)e(m\tau) \in S_{2k}$ we have
		\[
		\left\langle f,2^{2k-1}\sum_{m=1}^\infty  P_{2k,m}\right\rangle = \frac{(2k-2)!}{(2\pi)^{2k-1}}\sum_{m=1}^{\infty}\frac{c_f(m)}{m^{2k-1}} = \frac{(2k-2)!}{(2\pi)^{2k-1}}L_f(2k-1) = r_{2k-2}(f) = \langle f,R_{2k-2} \rangle
		\]
		by the usual Petersson inner product formula for $P_{2k,m}(\tau)$. In other words, this means that $2^{2k-1}\sum_{m=1}^\infty P_{2k,m}(\tau)$ converges weakly to $R_{2k-2}(\tau)$, which implies $2^{2k-1}\sum_{m=1}^\infty P_{2k,m}(\tau) = R_{2k-2}(\tau)$ since $S_{2k}$ is finite-dimensional.
	\end{proof}

We state the explicit formulas of Kohnen and Zagier \cite{kohnenzagierrationalperiods} for the even and odd period polynomials of the cusp forms $R_n(\tau)$, in a form that is convenient for our purposes. 

\begin{theorem}[\cite{kohnenzagierrationalperiods}, Theorem 1']\label{Periods R_n}
For $0\leq n\leq 2k-2$ even, the odd period polynomial of $R_n(\tau)$ is given by
\begin{multline*}
\left(\frac{i}{2}\right)^{2k-2}ir^-_{R_n}(\tau)
=i^{n+1}\left(\frac{B_{n+1}(\tau)}{n+1}- \frac{B_{2k-1-n}(\tau)}{2k-1-n}\right)\Bigl|_{2-2k}(I-S)\\
+i^{n+1}(\tau^n-\tau^{2k-2-n})+i(\delta_{n=0}+(-1)^k\delta_{n=2k-2})r^-_{E_{2k}}(\tau),
\end{multline*}
and for $0< n< 2k-2$ odd, the even period polynomial of $R_n(\tau)$ is given by
\begin{multline*}
\left(\frac{i}{2}\right)^{2k-2}r^+_{R_n}(\tau)
=-i^{n+1}\left(\frac{B_{n+1}(\tau)}{n+1}+ \frac{B_{2k-1-n}(\tau)}{2k-1-n}\right)\Bigl|_{2-2k}(I-S)  \\
-i^{n+1}(\tau^{n} + \tau^{2k-2-n})+r_n(E_{2k})(\tau^{2k-2}-1).
\end{multline*}
\end{theorem}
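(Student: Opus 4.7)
The strategy, following Kohnen and Zagier, is to evaluate the period polynomial
\[
r_{R_n}(\tau)=\int_0^{i\infty}R_n(z)(z-\tau)^{2k-2}\,dz
\]
directly from the Poincar\'e series representations in Proposition~\ref{proposition Rn}. For $0<n<2k-2$ we substitute $R_n(z)=c_n\sum_{M\in\Gamma}z^{-n-1}\bigl|_{2k}M$ with $c_n=2^{2k-3}/(i^{2k-1-n}\binom{2k-2}{n}\pi)$, rearrange the conditionally convergent double sum, and perform the change of variables $z\mapsto M^{-1}z$ in each summand. This transforms the integration contour into the geodesic from $M^{-1}(0)=-d/c$ to $M^{-1}(i\infty)=a/c$, and after grouping the matrices by this pair of cusps the sum becomes a double series over coprime pairs $(c,d)$ with $c\geq 0$. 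For the boundary indices $n\in\{0,2k-2\}$ this series representation degenerates, and one uses instead the cuspidal Poincar\'e-series representation $R_{2k-2}=(-1)^kR_0=2^{2k-1}\sum_{m\geq 1}P_{2k,m}$, whose period polynomial can be computed term by term from the Fourier expansion of $P_{2k,m}$.

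In both cases, the contribution of the identity matrix (combined with its $S$-translate via $\tau\mapsto-1/\tau$) produces the non-Bernoulli polynomial term $i^{n+1}(\tau^n\mp\tau^{2k-2-n})$, while summing the remaining contributions over $c\geq 1$ and $d\pmod c$ using the classical Lipschitz identity
\[
\sum_{d\in\Z}\frac{1}{(z+d)^{m}}=\frac{(-2\pi i)^m}{(m-1)!}\sum_{\ell\geq1}\ell^{m-1}e^{2\pi i\ell z}\qquad(m\geq 2,\ \Im(z)>0)
\]
and its Hurwitz-sum counterpart reproduces the building blocks $B_{n+1}(\tau)/(n+1)\mp B_{2k-1-n}(\tau)/(2k-1-n)$ slashed with $(I-S)$. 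The constant-term contributions of the Bernoulli polynomial expansions, encoded in the Hurwitz zeta values $\zeta(n+1)$ and $\zeta(2k-1-n)$, combine with the explicit Eisenstein periods from \eqref{periodEisenstein}: for odd $n$ they reassemble into $r_n(E_{2k})(\tau^{2k-2}-1)$, while for even $n$ they vanish on the interior and combine with the $\zeta(2k-1)$-term from $\sum_{m\geq 1}P_{2k,m}$ at the boundary to produce $r^-_{E_{2k}}(\tau)$ with the correct sign $\delta_{n=0}+(-1)^k\delta_{n=2k-2}$. Extracting the odd part of the resulting polynomial for even $n$, and the even part for odd $n$, then yields the two formulas of the theorem, the prefactor $(i/2)^{2k-2}$ being absorbed into the normalisation of $c_n$ and the Jacobians of the substitutions.

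The main obstacle will be the bookkeeping for the conditionally convergent Poincar\'e series: the separation of the $c=0$ stratum from the $c\neq 0$ stratum must be justified rigorously so that the telescoping induced by the involution $\tau\mapsto-1/\tau$ is legitimate, and the application of the Lipschitz formula has to be carried out in a way that respects this rearrangement. Once this analytic step is in place, the identification of the combinatorial sums with the $B_m(\tau)$-expressions on the right-hand side is a direct matching, and the Eisenstein correction emerges automatically from the leading (zero-Fourier-coefficient) term of the Hurwitz expansion, explaining its precise appearance in each of the two displayed identities.
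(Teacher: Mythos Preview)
The paper does not give its own proof of this statement: Theorem~\ref{Periods R_n} is quoted verbatim (in a slightly reformulated form) from Kohnen and Zagier's paper \cite{kohnenzagierrationalperiods}, Theorem~1', and is used here only as input to the proof of Lemma~\ref{lemma trafo P}. So there is no ``paper's proof'' to compare against.

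What you have written is a reasonable outline of the original Kohnen--Zagier argument: insert the Poincar\'e-series representation of $R_n$ into the period integral, unfold, separate the $c=0$ stratum from the rest, and identify the resulting pieces with Bernoulli polynomials via Lipschitz/Hurwitz summation, with the Eisenstein correction emerging from the zero-frequency terms. That is indeed how the result is obtained in \cite{kohnenzagierrationalperiods}. However, your text is a proof \emph{sketch}, not a proof: you name the steps but do not carry any of them out, and you yourself flag the convergence/rearrangement issue as ``the main obstacle'' without resolving it. In particular, the Poincar\'e series for $R_n$ with $0<n<2k-2$ is only conditionally convergent, so the unfolding and the interchange of sum and integral genuinely require the Hecke-trick or an analogous regularisation, and the matching of the $c\geq 1$ sum with the slashed Bernoulli expressions needs to be written down explicitly rather than asserted. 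If you intend this as a self-contained proof, those computations must appear; if you intend it as a citation, a single sentence pointing to \cite{kohnenzagierrationalperiods} suffices, which is what the present paper does.
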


\section{The proof of Proposition~\ref{proposition diffops}}\label{section proof proposition diffops}

First note that the Poincar\'e series defined in \eqref{eq Peterssons Poincare series} satisfy the differential equations
\begin{align*}
R_{-2\ell,\tau}H_{k,\ell}(z,\tau) &=  (k-\ell)H_{k,\ell-1}(z,\tau), \\
L_{-2\ell,\tau}H_{k,\ell}(z,\tau) &= (k+\ell)H_{k,\ell+1}(z,\tau),
\end{align*}
which can be checked by a direct computation. Furthermore, by Bol's identity the iterated derivative $\mathcal{D}^{2k-1}$ can be expressed in terms of the iterated raising operator by
\[
\mathcal{D}^{2k-1} = -\frac{1}{(4\pi)^{2k-1}}R_{2-2k}^{2k-1},
\]
compare equation (56) in Zagier's part of \cite{zagier123}. In particular, we find
\begin{align*}
\xi_{2-2k,\tau}H_{k,k-1}(z,\tau) &= v^{-2k}\overline{L_{2-2k,\tau}H_{k,k-1}(z,\tau)} = (2k-1)v^{-2k}\overline{H_{k,k}(z,\tau)}, \\
\mathcal{D}_\tau^{2k-1}H_{k,k-1}(z,\tau) &= -\frac{1}{(4\pi)^{2k-1}}R_{2-2k, \tau}^{2k-1}H_{k,k-1}(z,\tau)=-\frac{(2k-1)!}{(4\pi)^{2k-1}}H_{k,-k}(z,\tau).
\end{align*}
For $\tau \in \C\setminus i\R_{\geq 0}$ we will need the evaluation 
\begin{align}\label{yintegraleval}
\int_{0}^{\infty} \frac{y^ndy}{(iy-\tau)^{2k}} = \frac{i^{n+1}}{2k-1}\binom{2k-2}{n}^{-1}\tau^{n+1-2k},
\end{align}
which can be shown directly for $\tau = iv$ with $v < 0$ and then follows by analytic continuation for all $\tau \in \C \setminus i\R_{\geq 0}$.
For $0 < n < 2k-2$ we can now compute
\begin{align*}
\xi_{2-2k}\mathcal{H}_{1-k,n}(\tau) &=(2k-1) \frac{(2i)^{2k-2}}{2\pi}v^{-2k}\overline{\sum_{M\in\Gamma}\left(v^{2k}\int_{0}^{\infty} \frac{y^ndy}{(iy-\overline{\tau})^{2k}}\right)\Bigl|_{-2k,\tau}M}\\
&=(-i)^{n+1}\frac{(2i)^{2k-2}}{2\pi}\binom{2k-2}{n}^{-1}v^{-2k}\overline{\sum_{M\in\Gamma}v^{2k}\overline{\tau}^{n+1-2k}\Bigl|_{-2k,\tau}M} \\
&=(-i)^{n+1}\frac{(2i)^{2k-2}}{2\pi}\binom{2k-2}{n}^{-1}\sum_{M\in\Gamma}\tau^{n+1-2k}\Bigl|_{2k,\tau}M\\
&= (-1)^{k-1}R_{2k-2-n}(\tau)= - R_n(\tau),
\end{align*}
and 
\begin{align*}
\mathcal{D}^{2k-1}\mathcal{H}_{1-k,n}(\tau) &=(-1)^k\frac{(2k-1)!}{2(2\pi)^{2k}}\sum_{M\in\Gamma}\left(\int_{0}^{\infty}  \frac{y^ndy}{(iy-\tau)^{2k}}\right)\Bigl|_{2k,\tau}M\\
&=i^{n+1}(-1)^k\frac{(2k-2)!}{2(2\pi)^{2k}}\binom{2k-2}{n}^{-1}\sum_{M\in\Gamma}\tau^{n+1-2k}\Bigl|_{2k,\tau}M\\
& = (-1)^{n+1} \frac{(2k-2)!}{(4\pi)^{2k-1}}(-1)^k R_{2k-2-n}(\tau) = (-1)^{n+1}\frac{(2k-2)!}{(4\pi)^{2k-1}}R_{n}(\tau),
\end{align*}
where we used the series representation of $R_n(\tau)$ from Proposition~\ref{proposition Rn}.

However, for $n = 0$ or $n = 2k-2$ the series representation of $R_n(\tau)$ used above does not converge, so we need to proceed differently. By the symmetries $\mathcal{H}_{1-k,n}(\tau) = (-1)^k \mathcal{H}_{1-k,2k-2-n}(\tau)$ and $R_{n}(\tau) = (-1)^k R_{2k-2-n}(\tau)$ it suffices to treat the case $n = 2k-2$. First, we write
\begin{align*}
\xi_{2-2k}\mathcal{H}_{1-k,2k-2}(\tau) &= 2(2k-1) \frac{(2i)^{2k-2}}{2\pi}v^{-2k}\overline{\sum_{M\in\Gamma_\infty\backslash\Gamma}\left(v^{2k}\int_{0}^{\infty} \sum_{m \in \Z} \frac{y^{2k-2}dy}{(iy-\overline{\tau}+m)^{2k}}\right)\Bigl|_{-2k,\tau}M}.
\end{align*}
Using the Lipschitz formula
\begin{align}\label{eq Lipschitz formula}
\sum_{m \in \Z}(z + m)^{-2k} = \frac{(2\pi i )^{2k}}{(2k-1)!}\sum_{m = 1}^\infty m^{2k-1}e(mz),
\end{align}
which is valid for $z \in \H$, we can rewrite the integral as
\begin{align*}
\int_{0}^{\infty} \sum_{m \in \Z} \frac{y^{2k-2}dy}{(iy-\overline{\tau}+m)^{2k}} &= \frac{(2\pi i)^{2k}}{(2k-1)!}\int_{0}^{\infty}y^{2k-2} \sum_{m = 1}^{\infty}m^{2k-1}e^{-2\pi m y}e(-m\overline{\tau})dy \\
&= \frac{2\pi (-1)^k}{2k-1}\sum_{m=1}^\infty e(-m \overline{\tau}).
\end{align*}
Putting everything together, we obtain
\begin{align*}
\xi_{2-2k}\mathcal{H}_{1-k,2k-2}(\tau) &= -2^{2k-1}v^{-2k}\overline{\sum_{M \in \Gamma_\infty\backslash \Gamma}\left(v^{2k}\sum_{m=1}^\infty e(-m \overline{\tau})\right)\Bigl|_{-2k,\tau}M} \\
&= -2^{2k-1}\sum_{m=1}^\infty \sum_{M \in \Gamma_\infty \backslash \Gamma}e(m \tau)\Bigl|_{2k,\tau}M \\
&= -2^{2k-1}\sum_{m=1}^\infty P_{2k,m}(\tau) = - R_{2k-2}(\tau),
\end{align*}
where we used Proposition~\ref{proposition Rn} in the last equality.

To compute $\mathcal{D}^{2k-1}\mathcal{H}_{1-k,2k-2}(\tau)$, we write as before
\begin{align*}
\mathcal{D}^{2k-1}\mathcal{H}_{1-k,2k-2}(\tau) &= -\frac{(2i)^{2k-2}(2k-1)!}{\pi (4\pi)^{2k-1}}\sum_{M \in \Gamma_\infty\backslash\Gamma}\left(\int_0^\infty \sum_{m \in \Z}\frac{y^{2k-2}}{(iy-\tau+m)^{2k}} dy\right)\Bigl|_{2k,\tau}M.
\end{align*}
It now suffices to prove the identity
\begin{align}\label{eq D identity}
\int_0^\infty \sum_{m \in \Z}\frac{y^{2k-2}}{(iy-\tau+m)^{2k}}dy = \frac{2^{2k-1}\pi}{(2i)^{2k-2}(2k-1)}\cdot\frac{1}{1-e(\tau)}
\end{align}
for $\tau \in \H$ with $0 < u < 1$, since we can write $\frac{1}{1-e(\tau)} = 1+\sum_{m=1}^\infty e(m\tau)$ and then finish the computation of $\mathcal{D}^{2k-1}\mathcal{H}_{1-k,2k-2}(\tau)$ in the same way as for the $\xi_{2-2k}$-image, using Proposition~\ref{proposition Rn}. Note that we cannot directly apply the Lipschitz formula \eqref{eq Lipschitz formula} to prove the identity \eqref{eq D identity} since $iy-\tau$ has negative imaginary part for $0 < y < v$. However, since both sides of \eqref{eq D identity} are holomorphic functions on the vertical strip consisting of all $\tau \in \C$ with $0 < u < 1$, it is enough to prove \eqref{eq D identity} for all $\tau$ in this strip with $v < 0$ and then use analytic continuation. Under the assumption $v < 0$ we can apply the Lipschitz formula~\eqref{eq Lipschitz formula} and obtain similarly as before
\begin{align*}
	\int_0^\infty \sum_{m \in \Z}\frac{y^{2k-2}}{(iy-\tau+m)^{2k}}dy
	&= \frac{(2\pi i)^{2k}}{(2k-1)!} \int_0^\infty y^{2k-2}\sum_{m=1}^\infty m^{2k-2}e^{-2\pi m y}e(-m\tau)dy \\ 
	&= \frac{(2\pi i)^{2k}}{(2k-1)!}\cdot \frac{(2k-2)!}{(2\pi)^{2k-1}}\sum_{m=1}^\infty e(-m\tau) \\
	&= -\frac{2^{2k-1}\pi}{(2i)^{2k-2}(2k-1)}\left(\frac{1}{1-e(-\tau)}-1 \right) \\
	&= \frac{2^{2k-1} \pi}{(2i)^{2k-2}(2k-1)}\cdot\frac{1}{1-e(\tau)},
\end{align*}
	which yields \eqref{eq D identity} and concludes the computation of $\mathcal{D}^{2k-1}\mathcal{H}_{1-k,2k-2}(\tau)$. This finishes the proof of Proposition~\ref{proposition diffops}.

\section{The proof of Theorem~\ref{theorem splitting}}\label{section proof theorem splitting}

	In this section we prove Theorem~\ref{theorem splitting}, i.e., the decomposition of the locally harmonic Maass form $\mathcal{H}_{1-k,n}(\tau)$ into a sum of a locally polynomial part and Eichler integrals of the cusp forms $R_n(\tau)$. For brevity, we let
 \begin{align*}
 \widetilde{\mathcal{H}}_{1-k,n}(\tau) &:= \mathcal{P}_{1-k,n}(\tau) + (-1)^{n+1}\frac{(2k-2)!}{(4\pi )^{2k-1}}\mathcal{E}_{R_n}(\tau) - R_{n}^*(\tau) \\
 &\quad-((-1)^k\delta_{n=0}+\delta_{n=2k-2})\frac{(2k-2)!}{(2\pi)^{2k-1}}\mathcal{E}_{E_{2k}}(\tau)
 \end{align*}
 be the expression on the right-hand side of Theorem~\ref{theorem splitting}. Then the theorem is equivalent to the identity $\mathcal{H}_{1-k,n}(\tau) = \widetilde{\mathcal{H}}_{1-k,n}(\tau)$. In order to prove this identity, we show that $\widetilde{\mathcal{H}}_{1-k,n}(\tau)$ is a locally harmonic Maass form of weight $2-2k$ with the same singularities as $\mathcal{H}_{1-k,n}(\tau)$ and the same images under the differential operators $\xi_{2-2k}$ and $\mathcal{D}^{1-2k}$. This implies that the difference $\mathcal{H}_{1-k,n}(\tau)-\widetilde{\mathcal{H}}_{1-k,n}(\tau)$ is a polynomial on $\H$ which is also modular of negative weight $2-2k$, and hence vanishes identically.
 
 We first show that $\widetilde{\mathcal{H}}_{1-k,n}(\tau)$ is modular. To this end, we study the modularity properties of the local polynomial $\mathcal{P}_{1-k,n}(\tau)$. We start by rewriting $\mathcal{P}_{1-k,n}(\tau)$ in a more convenient form. 

\begin{lemma}\label{lemma alternative polynomial}
For $\tau \in \H$ we have 
\begin{align*}
\mathcal{P}_{1-k,n}(\tau) &= r_n(E_{2k}) +\frac{(-i)^{n+1}}{n+1}B_{n+1}(\tau)+\frac{i^{n+1}}{2k-1-n}B_{2k-1-n}(\tau)
 \\
 &\quad +\frac{(-i)^{n+1}}{2}\left(\tau^n-(-1)^n\tau^{2k-2-n}\right)+\frac{i^{1-n}}{4}\sum_{M\in\Gamma}((\sgn(u)-\sgn(M))\tau^n)\Bigl |_{2-2k}M,
\end{align*}
where
\[
\sgn\left(\begin{pmatrix} a & b\\ c& d \end{pmatrix}\right):=\begin{cases}
\sgn(ac), & \text{if $ac\neq 0$,}\\
\sgn(bd), & \text{otherwise.}
\end{cases}
\]
\end{lemma}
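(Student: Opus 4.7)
The plan is to expand and match the two expressions term by term. Two main algebraic tools are needed.

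First, the Bernoulli shift $B_m(x+1)-B_m(x)=mx^{m-1}$, iterated, gives for $u=\Re(\tau)\notin\Z$
\[
\mathbb{B}_m(\tau) - B_m(\tau) = -(\sgn u)\,m\sum_{j=1}^{\lfloor|u|\rfloor}\bigl(\tau-(\sgn u)j\bigr)^{m-1},
\]
converting the periodic-versus-polynomial discrepancy into a finite sum of powers of integer translates of $\tau$.

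Second, the involution $M \mapsto -SM$ on $\Gamma$, which sends $(a,b,c,d)\mapsto(c,d,-a,-b)$. A short calculation shows that this flips $\sgn(ac)$, preserves $\sgn(\Re(M\tau))$ (because $(-SM)\tau = 1/M\tau$), and satisfies the slash identity
\[
\tau^n|_{2-2k}(-SM) = (-1)^n\tau^{2k-2-n}|_{2-2k}M.
\]

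With these tools I would expand $\sum_{M\in\Gamma}((\sgn(u)-\sgn(M))\tau^n)|_{2-2k}M$ by splitting matrices according to whether $c=0$, $a=0$, or $ac\neq 0$. The $c=0$ matrices $\pm T^b$ yield $(\tau\mp b)^n$; combined with the sign weight and the $\pm$-pair structure they produce $2\sum_b(\sgn(u)-\sgn(b))(\tau+b)^n$. The $a=0$ matrices $\pm ST^b$ similarly give $2(-1)^n\sum_b(\sgn(u)+\sgn(b))(\tau+b)^{2k-2-n}$, since $\sgn(\pm ST^b)=-\sgn(b)$. The $ac\neq 0$ matrices pair under $M \leftrightarrow -SM$ into $(\tau^n-(-1)^n\tau^{2k-2-n})|_{2-2k}M$, with the sign weight $(\sgn(u)-\sgn(M))$ selecting exactly the finite set $\{M : ac>0,\ \Re(M\tau)<0\}$ present in the original definition of $\mathcal{P}_{1-k,n}$ (since such $\tau$ lie inside the bounded semicircle $M^{-1}(i\R_+)$). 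Inserting the prefactor $\frac{i^{1-n}}{4} = -\frac{(-i)^{n+1}}{4}$ and using the sign identity $i^{n+1}=(-1)^{n+1}(-i)^{n+1}$, the three parts match respectively the Bernoulli difference $\frac{(-i)^{n+1}}{n+1}(\mathbb{B}_{n+1}-B_{n+1})$ plus the $\tau^n$ half of $\frac{(-i)^{n+1}}{2}(\tau^n-(-1)^n\tau^{2k-2-n})$, its counterpart for $\mathbb{B}_{2k-1-n}$ and the $\tau^{2k-2-n}$ half, and twice the original finite matrix sum.

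The main obstacle is that $\sum_{M\in\Gamma}(\sgn(u)-\sgn(M))\tau^n|_{2-2k}M$ is only conditionally convergent: for $u > 0$ the weight $\sgn(u)-\sgn(M)$ is non-negative, and the summand $\tau^n|_{2-2k}M$ grows polynomially in $\|M\|$. The identity therefore requires a fixed summation order or regularization (presumably a Hecke-style $s$-regularization, inherited from \cite{ls}), under which the divergent tails of the $c=0$ and $a=0$ sums (indices $|b|>\lfloor|u|\rfloor$) cancel against the corresponding non-local portion of the $ac\neq 0$ sum. Carefully justifying this cancellation, together with handling the average-value convention on the singular set where $\Re(M\tau)=0$ for some $M$, is the most delicate bookkeeping of the proof.
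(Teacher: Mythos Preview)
Your overall strategy---split the $\Gamma$-sum according to $ac=0$ versus $ac\neq 0$, use the Bernoulli shift $B_m(\tau+1)-B_m(\tau)=m\tau^{m-1}$ for the former, and pair $M$ with $SM$ (equivalently $-SM$) for the latter---is exactly the paper's approach, just run in the opposite direction. However, there is a genuine misreading that leads you to invent a nonexistent convergence problem.

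In the expression
\[
\bigl((\sgn(u)-\sgn(M))\,\tau^n\bigr)\Big|_{2-2k}M,
\]
the factor $\sgn(u)$ is part of the function being slashed. Applying $|_{2-2k}M$ therefore replaces $u=\Re(\tau)$ by $\Re(M\tau)$, giving
\[
(c\tau+d)^{2k-2}\bigl(\sgn(\Re(M\tau))-\sgn(M)\bigr)(M\tau)^n.
\]
This coefficient vanishes for all but finitely many $M$: if $ac>0$ then $\Re(M\tau)<0$ only when $\tau$ lies inside the semicircle $M^{-1}(i\R_+)$ of radius $\tfrac{1}{2|ac|}$, and similarly for $ac<0$ via $M\mapsto SM$; for $c=0$ (resp.\ $a=0$) one has $\sgn(\Re(T^j\tau))=\sgn(u+j)$ (resp.\ $\sgn(\Re(ST^j\tau))=-\sgn(u+j)$), which differs from $\sgn(M)=\sgn(j)$ (resp.\ $-\sgn(j)$) only for $|j|\leq |u|$. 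Hence the sum in the lemma is \emph{finite}, no regularization is needed, and the ``main obstacle'' you describe disappears. Your formulas $2\sum_b(\sgn(u)-\sgn(b))(\tau+b)^n$ etc.\ should accordingly read $2\sum_b(\sgn(u+b)-\sgn(b))(\tau+b)^n$, which are finite sums matching $\mathbb{B}_m-B_m$ exactly as the paper computes.

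A smaller slip: the involution $M\mapsto -SM$ (equivalently $M\mapsto SM$ as M\"obius maps) sends $M\tau$ to $-1/(M\tau)$, so it \emph{flips} $\sgn(\Re(M\tau))$ rather than preserving it, while also flipping $\sgn(ac)$. These two sign changes together give $(\sgn(\Re(SM\tau))-\sgn(SM))=-(\sgn(\Re(M\tau))-\sgn(M))$, and combined with $\tau^n|_{2-2k}S=(-1)^n\tau^{2k-2-n}$ this produces exactly the summand $(\tau^n-(-1)^n\tau^{2k-2-n})|_{2-2k}M$ over $\{ac>0,\ \Re(M\tau)<0\}$, as in the original definition of $\mathcal{P}_{1-k,n}$.
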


\begin{proof}
For simplicity, we sketch the computations in the case that $\tau\notin\bigcup_{M \in \Gamma}M(i\R_{+})$ and leave the general case to the reader. We first rewrite
\begin{multline*}
\sum_{\substack{M = \left(\begin{smallmatrix}a & b \\ c & d \end{smallmatrix}\right)\in\Gamma \\ ac>0, \Re(M\tau)> 0}}(\tau^n -(-1)^n \tau^{2k-2-n})\Bigl |_{2-2k}M = \sum_{\substack{M = \left(\begin{smallmatrix}a & b \\ c & d \end{smallmatrix}\right)\in\Gamma \\ ac>0, \Re(M\tau)> 0}}\tau^n \Bigl |_{2-2k}M - \tau^n|_{2-2k}SM \\  
=\sum_{\substack{M = \left(\begin{smallmatrix}a & b \\ c & d \end{smallmatrix}\right)\in\Gamma \\ ac \Re(M\tau) < 0 }}\sgn(M)\tau^n \Bigl |_{2-2k}M 
=-\frac12\sum_{\substack{M = \left(\begin{smallmatrix}a & b \\ c & d \end{smallmatrix}\right)\in\Gamma \\ ac \neq 0 }}\left((\sgn(u)-\sgn(M))\tau^n\right) \Bigl |_{2-2k}M. 
\end{multline*}
Here we used that $ac>0$ if and only if $SM = \left(\begin{smallmatrix}\alpha & \beta \\ \gamma & \delta \end{smallmatrix}\right)$ has $\alpha\gamma <0$ and $\sgn(\Re(SM\tau))\neq \sgn(\Re(M\tau))$.

Using $\frac{B_m(\tau+1)}{m} = \frac{B_m(\tau)}{m} + \tau^{m-1}$ we write
\begin{align*}
\frac{\mathbb{B}_m(\tau)}{m} -\frac{B_m(\tau)}{m} &= 
\begin{cases}-\sum_{j=1}^{\lfloor u \rfloor}(\tau-j)^{m-1},& \text{if $u> 0$,} \\
\sum_{j=0}^{\lfloor -u\rfloor } (\tau+j)^{m-1}, & \text{if $u<0$,}
\end{cases} \\
&= \frac12\tau^{m-1}+\frac12\sum_{j\in\Z}(\sgn(j)-\sgn(u+j))(\tau+j)^{m-1} \\
&=\frac12\tau^{m-1}+\frac14\sum_{\substack{M = \left(\begin{smallmatrix}a & b \\ c & d \end{smallmatrix}\right)\in\Gamma \\ c = 0 }}(\sgn(M)-\sgn(u))\tau^{m-1}\Bigl |_{2-2k}M.
\end{align*}
Hence, we find after a short computation
\begin{align*}
\Biggl(\frac{\mathbb{B}_{n+1}(\tau)}{n+1}&-(-1)^n\frac{\mathbb{B}_{2k-1-n}(\tau)}{2k-1-n}\Biggr)-\Biggl(\frac{B_{n+1}(\tau)}{n+1}-(-1)^n\frac{B_{2k-1-n}(\tau)}{2k-1-n}\Biggr) \\
&=\frac12\left(\tau^n -(-1)^n\tau^{2k-2-n}\right)- \frac14\sum_{\substack{M = \left(\begin{smallmatrix}a & b \\ c & d \end{smallmatrix}\right)\in\Gamma \\ ac = 0 }}(\sgn(u)-\sgn(M))\tau^{n}\Bigl |_{2-2k}M.
\end{align*}
Gathering everything together, we obtain the formula given in the lemma.
\end{proof}

We can now state the transformation law for the locally polynomial part $\mathcal{P}_{1-k,n}(\tau)$.

\begin{lemma}\label{lemma trafo P}
For $\tau \in \H$ we have
\begin{align*}
\mathcal{P}_{1-k,n}(\tau)\Bigl|_{2-2k}(I-T) = 0
\end{align*}
and
\begin{align*}
\mathcal{P}_{1-k,n}(\tau)\Bigl|_{2-2k}(I-S) = \begin{dcases}
-\left(\frac{i}{2}\right)^{2k-2}r^+_{R_n}(\tau),&\text{if $n$ is odd,} \\
\begin{aligned} &-\left(\frac{i}{2}\right)^{2k-2}ir^-_{R_n}(\tau)  \\
&\quad+i(\delta_{n=0} +(-1)^k \delta_{n=2k-2})r_{E_{2k}}(\tau),
\end{aligned}&\text{if $n$ is even.}
\end{dcases}
\end{align*}
\end{lemma}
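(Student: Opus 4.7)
The plan is to verify both transformation formulas directly from the explicit expressions for $\mathcal{P}_{1-k,n}(\tau)$, exploiting the \emph{original} formula from Theorem~\ref{theorem splitting} for the $T$-invariance and the \emph{alternative} expression from Lemma~\ref{lemma alternative polynomial} for the $S$-transformation. The alternative form features the ordinary Bernoulli polynomials $B_m(\tau)$, which match the ingredients of the Kohnen--Zagier formulas in Theorem~\ref{Periods R_n}.

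For $|_{2-2k}(I-T)$, every piece of the original formula is manifestly $T$-invariant. The constant $r_n(E_{2k})$ is trivially invariant, the functions $\mathbb{B}_m(\tau)$ are $1$-periodic by construction, and the lattice sum is preserved under the substitution $M \mapsto MT$: the index set $\{M = \left(\begin{smallmatrix}a & b \\ c & d \end{smallmatrix}\right) \in \Gamma : ac > 0,\ \Re(M\tau) < 0\}$ bijects with itself because $a_{MT}c_{MT} = ac$ and $\Re((MT)\tau) = \Re(M(\tau+1))$, while the summand $(\tau^n - (-1)^n \tau^{2k-2-n})|_{2-2k}M$ becomes $(\tau^n - (-1)^n \tau^{2k-2-n})|_{2-2k}MT$ under $\tau \mapsto \tau+1$.

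For $|_{2-2k}(I-S)$ I would apply the operator term by term to the alternative expression, using $\tau^m|_S = (-1)^m \tau^{2k-2-m}$. The constant contributes $r_n(E_{2k})(1-\tau^{2k-2})$, the polynomial $\tfrac{(-i)^{n+1}}{2}(\tau^n-(-1)^n\tau^{2k-2-n})$ doubles under $|_{2-2k}(I-S)$, and the Bernoulli combination rearranges, depending on the parity of $n$, as $\mp i^{n+1}\bigl(\tfrac{B_{n+1}(\tau)}{n+1} \mp \tfrac{B_{2k-1-n}(\tau)}{2k-1-n}\bigr)\bigl|_{2-2k}(I-S)$. Comparing with Theorem~\ref{Periods R_n} identifies these Bernoulli and polynomial pieces with $-(\tfrac{i}{2})^{2k-2}i r_{R_n}^-(\tau)$ up to the Eisenstein remainder $i(\delta_{n=0}+(-1)^k\delta_{n=2k-2})r_{E_{2k}}^-(\tau)$ when $n$ is even, and with $-(\tfrac{i}{2})^{2k-2}r_{R_n}^+(\tau)$ up to $-r_n(E_{2k})(\tau^{2k-2}-1)$ when $n$ is odd. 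In the odd case this remainder is exactly cancelled by the constant contribution $r_n(E_{2k})(1-\tau^{2k-2})$; in the even case it combines with the constant, using the decomposition $r_{E_{2k}} = r_{E_{2k}}^- + i r_{E_{2k}}^+$ together with the formula \eqref{EisenEven} for $r_{E_{2k}}^+$, to produce the required $i(\delta_{n=0}+(-1)^k\delta_{n=2k-2})r_{E_{2k}}(\tau)$.

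The main obstacle is then verifying that the residual contribution $\tfrac{i^{1-n}}{4}\sum_{M\in\Gamma}((\sgn(u)-\sgn(M))\tau^n)|_{2-2k}M$ is annihilated by $I-S$. The substitution $M \mapsto MS^{-1}$ in the $|_S$-part converts $\sgn(M) = \sgn(ac)$ into $\sgn(MS^{-1}) = \sgn(bd)$ on matrices with $ac,bd \neq 0$, so the surviving terms are indexed by matrices with $\sgn(ac) \neq \sgn(bd)$; I would pair these via the involution $M \leftrightarrow MS$, under which the sign factor $\sgn(bd)-\sgn(ac)$ flips and $\tau^n|_{2-2k}MS$ is related to $(\tau^n|_{2-2k}M)|_{2-2k}S$. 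The delicate point is the conditional convergence of this sum, which I would address by truncating at $|c|+|d|\leq N$ and passing to the limit, in the same spirit as the Cauchy principal value used in the definition of $\mathcal{P}_{1-k,n}(\tau)$ on its singular set.
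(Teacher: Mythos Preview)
Your approach to the $T$-invariance is correct and in fact cleaner than the paper's. You use the original formula from Theorem~\ref{theorem splitting}, where the $1$-periodic Bernoulli functions $\mathbb{B}_m$ and the constant are visibly $T$-invariant, and the lattice sum is preserved because both the index condition $\{ac>0,\ \Re(M\tau)<0\}$ and the summand are unchanged under $M\mapsto MT$. The paper instead works throughout with the alternative expression of Lemma~\ref{lemma alternative polynomial} and must verify by hand that the four boundary matrices in $\pm\{I,S,T,ST\}$ (the only $M$ with $\sgn(MT^{-1})\neq\sgn(M)$) cancel against the Bernoulli shift $B_m(\tau+1)=B_m(\tau)+m\tau^{m-1}$.

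For the $S$-transformation your treatment of the constant, the explicit polynomial, and the Bernoulli terms matches the paper. The gap is in the lattice sum. The paper observes in one line that $\sgn(MS^{-1})=\sgn(M)$ for \emph{every} $M\in\Gamma$, so the sum is exactly $S$-invariant and contributes nothing to $\mathcal{P}_{1-k,n}|_{2-2k}(I-S)$. This rests on the elementary fact that $\sgn(ac)=\sgn(bd)$ for every $\left(\begin{smallmatrix}a&b\\c&d\end{smallmatrix}\right)\in\SL_2(\Z)$ with $abcd\neq 0$: if $a,c>0$ and $b,d$ had opposite signs, then $ad-bc$ would be either $\leq -1$ or $\geq 2$; the edge cases $ac=0$ or $bd=0$ are absorbed by the very definition of $\sgn(M)$. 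You compute correctly that the substitution produces $\sgn(bd)$ in place of $\sgn(ac)$, but then assume there are surviving terms with $\sgn(ac)\neq\sgn(bd)$---there are none. Your proposed pairing $M\leftrightarrow MS$ does flip this coefficient, but since $\tau^n|_{2-2k}M$ and $\tau^n|_{2-2k}MS$ are different functions the pair does not cancel; only the vanishing of each coefficient does. Finally, the convergence concern is misplaced: the sum in Lemma~\ref{lemma alternative polynomial} is \emph{finite} for each fixed $\tau$ (for $ac\neq 0$ the condition $\sgn(\Re(M\tau))\neq\sgn(M)$ forces $\tau$ into a semicircle of radius $\tfrac{1}{2|ac|}$, and only finitely many $ac=0$ terms contribute as well), so no truncation or principal-value device is needed.
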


\begin{proof} 
For the $T$-transformation, we write
$$
\left(\sum_{M\in\Gamma}\left((\sgn(u)-\sgn(M))\tau^n\right)\Bigl |_{2-2k}M\right)\Bigl|_{2-2k}T =
\sum_{M\in\Gamma}\left((\sgn(u)-\sgn(MT^{-1}))\tau^n\right)\Bigl |_{2-2k}M.
$$
Now one can show that the only matrices with $\sgn(MT^{-1})\neq \sgn(M)$ are $M\in\pm \{I, S, T, ST\}$. Therefore we get
\begin{align*}
\Biggl(&\sum_{M\in\Gamma}((\sgn(u)-\sgn(M))\tau^n)\Bigl |_{2-2k}M\Biggr)\Bigl|_{2-2k}(I-T)\\
&= 2\sum_{M\in\{I, S, T, ST\}}((\sgn(MT^{-1})-\sgn(M))\tau^n)\Bigl |_{2-2k}M\\
&= 2(-\tau^n + (-1)^n\tau^{2k-2-n} - (\tau+1)^n + (-1)^n(\tau+1)^{2k-2-n}).
\end{align*}
The Bernoulli polynomials satisfy $
\frac{B_m(\tau+1)}{m} = \frac{B_m(\tau)}{m} + \tau^{m-1}$,
so all in all we obtain
\begin{align*}
&\mathcal{P}_{1-k,n}(\tau)\Bigl|_{2-2k}(I-T) = \frac12 (-i)^{n+1}\Bigl(-2\tau^n +2(-1)^n\tau^{2k-2-n} + \tau^n -(\tau+1)^n -(-1)^n\tau^{2k-2-n} \\
&\qquad+ (-1)^n(\tau+1)^{2k-2-n}+\tau^n -(-1)^n\tau^{2k-2-n}+(\tau+1)^n -(-1)^n(\tau+1)^{2k-2-n} \Bigr) = 0.
\end{align*}

For the $S$-transformation, note that
\begin{align*}
\left(\sum_{M\in\Gamma}\left((\sgn(u)-\sgn(M))\tau^n\right)\Bigl |_{2-2k}M\right)\Bigl|_{2-2k}S 
&=\sum_{M\in\Gamma}\left((\sgn(u)-\sgn(MS^{-1}))\tau^n\right)\Bigl |_{2-2k}M \\
&= \sum_{M\in\Gamma}\left((\sgn(u)-\sgn(M))\tau^n\right)\Bigl |_{2-2k}M,
\end{align*}
since we have $\sgn(MS^{-1}) = \sgn(M)$ for all $M\in\Gamma$. Furthermore, if $n$ is odd, then $(-i)^{n+1} = i^{n+1}$ and Theorem \ref{Periods R_n} gives
\begin{align*}
&\Bigl(r_n(E_{2k}) +\frac{(-i)^{n+1}}{n+1}B_{n+1}(\tau)+\frac{i^{n+1}}{2k-1-n}B_{2k-1-n}(\tau) \\
&\qquad+\frac{(-i)^{n+1}}{2}(\tau^n-(-1)^n \tau^{2k-2-n}\Bigr)\Bigl|_{2-2k}(I-S) \\
&= -r_n(E_{2k})(\tau^{2k-2}-1) +i^{n+1}\left(\frac{B_{n+1}(\tau)}{n+1}+\frac{B_{2k-1-n}(\tau)}{2k-1-n}\right)\Bigl|_{2-2k}(I-S) \\
&\qquad +i^{n+1}(\tau^n+\tau^{2k-2-n}) \\
&=-\left(\frac{i}{2}\right)^{2k-2}r^+_{R_n}(\tau).
\end{align*}
If $n$ is even, then $(-i)^{n+1} = -i^{n+1}$ and Theorem~\ref{Periods R_n} yields
\begin{align*}
&\Bigl( r_n(E_{2k}) +\frac{(-i)^{n+1}}{n+1}B_{n+1}(\tau)+\frac{i^{n+1}}{2k-1-n}B_{2k-1-n}(\tau) \\
&\qquad +\frac{(-i)^{n+1}}{2}(\tau^n-(-1)^n\tau^{2k-2-n})\Bigr)\Bigl|_{2-2k}(I-S) \\
&=  -r_n(E_{2k})(\tau^{2k-2}-1) +i^{n+1}\left(-\frac{B_{n+1}(\tau)}{n+1}+\frac{B_{2k-1-n}(\tau)}{2k-1-n}\right)\Bigl|_{2-2k}(I-S) \\
&\qquad -i^{n+1}(\tau^n-\tau^{2k-2-n}) \\
&=-i\left(\frac{i}{2}\right)^{2k-2}r^-_{R_n}(\tau)+i(\delta_{n=0} +(-1)^k \delta_{n=2k-2})r_{E_{2k}}(\tau).
\end{align*}
In the last step we used that $r_n(E_{2k}) =0$ for even $n$ unless $n=0$ or $n=2k-2$, as well as \eqref{EisenSplit} and \eqref{EisenEven}. Pulling everything together, we obtain the stated result.
\end{proof}

Combining the above results, we obtain the modularity of the function $\widetilde{\mathcal{H}}_{1-k,n}(\tau)$ defined in the beginning of this section.

\begin{proposition}\label{proposition modularity Htilde}
	The function $\widetilde{\mathcal{H}}_{1-k,n}(\tau)$ transforms like a modular form of weight $2-2k$ for $\Gamma$.
\end{proposition}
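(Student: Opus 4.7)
The plan is to check modularity by verifying $\widetilde{\mathcal{H}}_{1-k,n}$ is invariant under the two generators $S$ and $T$ of $\Gamma$, i.e., that
\[
\widetilde{\mathcal{H}}_{1-k,n}\bigl|_{2-2k}(I-T) = 0 \quad\text{and}\quad \widetilde{\mathcal{H}}_{1-k,n}\bigl|_{2-2k}(I-S) = 0.
\]
The $T$-transformation is easy: by inspection of the $q$-expansions in \eqref{eq eichler integrals}, the Eichler integrals $\mathcal{E}_{R_n}$, $R_n^*$, and $\mathcal{E}_{E_{2k}}$ are all $1$-periodic in $\tau$, hence $T$-invariant. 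Combined with $\mathcal{P}_{1-k,n}|_{2-2k}(I-T) = 0$ from Lemma~\ref{lemma trafo P}, this gives $\widetilde{\mathcal{H}}_{1-k,n}|_{2-2k}(I-T) = 0$.

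The $S$-transformation is the substantive step. The plan is to substitute the explicit transformation laws \eqref{holomorphiceichlertrafo}, \eqref{nonholomorphiceichlertrafo} and \eqref{EisenEichlerTrafo} for the three Eichler-integral pieces, together with the formula for $\mathcal{P}_{1-k,n}|_{2-2k}(I-S)$ from Lemma~\ref{lemma trafo P}. After simplifying the prefactors (using e.g.\ $(-i)^{2k-1} = (-1)^k i$), the Eichler contributions take the shape
\[
\alpha\, r_{R_n}(\tau) + \beta\, r_{R_n}^c(\tau) + \gamma_n\, r_{E_{2k}}(\tau),
\]
where $\gamma_n$ is supported on $n\in\{0,2k-2\}$. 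Now use $r_{R_n}=r_{R_n}^-+ir_{R_n}^+$ and $r_{R_n}^c=r_{R_n}^--ir_{R_n}^+$ to split according to parity. For $n$ odd the $r_{R_n}^-$-pieces cancel and only a multiple of $r_{R_n}^+(\tau)$ survives, while for $n$ even the $r_{R_n}^+$-pieces cancel leaving a multiple of $r_{R_n}^-(\tau)$ together with the Eisenstein contribution when $n\in\{0,2k-2\}$. These are exactly the expressions given by Lemma~\ref{lemma trafo P} for $\mathcal{P}_{1-k,n}|_{2-2k}(I-S)$, and matching the constants shows that the sum cancels.

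The main obstacle is purely bookkeeping: keeping the signs and powers of $i$ straight across four pieces with three different $S$-transformation formulas and two different parity cases. No new ideas beyond Lemma~\ref{lemma trafo P} and the standard Eichler-integral transformation laws recalled in Section~\ref{section prelims periods} are required.
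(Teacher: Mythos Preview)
Your proposal is correct and follows essentially the same route as the paper: $T$-invariance from $1$-periodicity of the Eichler integrals together with Lemma~\ref{lemma trafo P}, and $S$-invariance by combining \eqref{holomorphiceichlertrafo}, \eqref{nonholomorphiceichlertrafo}, \eqref{EisenEichlerTrafo} with the parity splitting of $r_{R_n}$ and Lemma~\ref{lemma trafo P}. The one point you use implicitly but should state is that $R_n$ has real Fourier coefficients, which is what justifies $r_{R_n}^c = r_{R_n}^- - i r_{R_n}^+$; the paper invokes this explicitly.
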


\begin{proof}
	Since the Eichler integrals are one-periodic by definition and the local polynomial $\mathcal{P}_{1-k,n}(\tau)$ is one-periodic by Lemma~\ref{lemma trafo P}, the same is true for $\widetilde{\mathcal{H}}_{1-k,n}(\tau)$.

	Next, we show the invariance of $\widetilde{\mathcal{H}}_{1-k,n}(\tau)$ under $S$. Using \eqref{holomorphiceichlertrafo} and \eqref{nonholomorphiceichlertrafo}, and the fact that $R_n(\tau)$ has real Fourier coefficients, we compute
\begin{align*}
\left((-1)^{n+1}\frac{(2k-2)!}{(4\pi )^{2k-1}}\mathcal{E}_{R_n}(\tau) - R_{n}^*(\tau)\right)\Bigl|_{2-2k}(I-S)
&=\left(\frac{i}{2}\right)^{2k-2}
\begin{cases}
r_{R_n}^+(\tau), & \text{if $n$ is odd,}\\
ir_{R_n}^-(\tau),& \text{if $n$ is even.}
\end{cases}
\end{align*}
Combining this with Lemma~\ref{lemma trafo P} (and \eqref{EisenEichlerTrafo} if $n = 0$ or $n = 2k-2$), we see that 
\[
\widetilde{\mathcal{H}}_{1-k,n}(\tau)\Bigl|_{2-2k}(I-S) = 0.
\]
This finishes the proof.
\end{proof}

Next, we determine the singularities of $\mathcal{H}_{1-k,n}(\tau)$ and $\widetilde{\mathcal{H}}_{1-k,n}(\tau)$. Here, we say that a function $f$ has a \emph{singularity of type $g$} at a point $\tau_{0}$ if there exists a neighbourhood $U$ of $\tau_{0}$ on which $f-g$ is harmonic. 

\begin{lemma}\label{jumps}
The functions $\mathcal{H}_{1-k,n}(\tau)$ and $\widetilde{\mathcal{H}}_{1-k,n}(\tau)$ are harmonic on $\H\setminus \bigcup_{M \in \Gamma}M(i\R_{+})$. At a point $\tau_{0} \in \H$ they have a singularity of type
	\[
 \frac{i^{1-n}}{4}\sum_{\substack{M\in\Gamma \\  \Re(M\tau_0)=0}}\left(\sgn(u)\tau^{n}\right)\Bigl|_{2-2k}M.
	\]
\end{lemma}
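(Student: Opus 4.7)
The plan is to handle $\widetilde{\mathcal{H}}_{1-k,n}$ and $\mathcal{H}_{1-k,n}$ separately, since they are built in very different ways. For $\widetilde{\mathcal{H}}_{1-k,n}$ I would use the alternative expression for $\mathcal{P}_{1-k,n}$ given by Lemma~\ref{lemma alternative polynomial}. The Eichler integrals $\mathcal{E}_{R_n}$, $R_n^*$ and $\mathcal{E}_{E_{2k}}$ are real-analytic, hence harmonic, on all of $\H$, and the pieces $r_n(E_{2k})$, $B_{n+1}(\tau)/(n+1)$, $B_{2k-1-n}(\tau)/(2k-1-n)$ and $\tau^n - (-1)^n\tau^{2k-2-n}$ are holomorphic. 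Every potential singularity thus comes from the locally finite sum
\[
\frac{i^{1-n}}{4}\sum_{M\in\Gamma}\bigl((\sgn(u)-\sgn(M))\tau^n\bigr)|_{2-2k}M,
\]
the local finiteness being exactly the content of the remark after Theorem~\ref{theorem splitting}. Writing $S_{\tau_0}:=\{M\in\Gamma:\Re(M\tau_0)=0\}$, for $M\notin S_{\tau_0}$ the sign $\sgn(\Re(M\tau))$ is locally constant near $\tau_0$, so the summand is a constant multiple of the holomorphic function $\tau^n|_{2-2k}M$. For $M\in S_{\tau_0}$ the summand splits as the jumping piece $\sgn(u)\tau^n|_{2-2k}M$ plus the holomorphic piece $-\sgn(M)\tau^n|_{2-2k}M$. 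Subtracting the asserted singularity therefore leaves a locally finite sum of holomorphic functions, which is harmonic near $\tau_0$.

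Next I would analyze $\mathcal{H}_{1-k,n}$ directly from its defining integral \eqref{definition curlyH}. On $\H\setminus\bigcup_{M}M(i\R_+)$, no $\Gamma$-translate of $\tau$ meets the integration contour, so $H_{k,k-1}(iy,\tau)y^n$ has no poles on $(0,\infty)$ and decays exponentially at both endpoints; since $H_{k,k-1}(z,\tau)$ is $\tau$-harmonic off $\Gamma z$, differentiation under the integral shows $\mathcal{H}_{1-k,n}$ is harmonic there. The heart of the proof is the local analysis at a point $\tau_0$ with $M_0\tau_0\in i\R_+$. There the integrand acquires a simple pole at $y_\tau:=-iM_0\tau$, and as $\tau$ crosses the semicircle $M_0^{-1}(i\R_+)$ the sign of $\Im(y_\tau)=-\Re(M_0\tau)$ flips. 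A direct residue calculation from the Poincar\'e series yields
\[
\mathrm{Res}_{z=M_0\tau}H_{k,k-1}(z,\tau) = (2i)^{-(2k-1)}(c_0\tau+d_0)^{2k-2},
\]
so the Laurent expansion of $H_{k,k-1}(iy,\tau)y^n$ at $y_\tau$ has leading coefficient $c_\tau = (-i)^{n+1}(2i)^{-(2k-1)}\tau^n|_{2-2k}M_0$. The Sokhotski--Plemelj formula then gives a jump contribution $i\pi\sgn(\Im(y_\tau))c_\tau$ in the integral; multiplying by the overall prefactor $(2i)^{2k-2}/(2\pi)$, the constants collapse to $1/4$, and using the identity $(-i)^{n+1} = -i^{1-n}$ the combined sign works out to exactly $\sgn(\Re(M_0\tau))\cdot \frac{i^{1-n}}{4}\tau^n|_{2-2k}M_0$. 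Summing over all $M\in S_{\tau_0}$ yields the claimed singularity type.

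The main obstacle I expect is justifying that the residue/contour-deformation analysis can be carried out pole-by-pole within the Poincar\'e series for $H_{k,k-1}$, that is, that the interchange of the $M$-summation and the Sokhotski--Plemelj step is legitimate and that both the remainder of the Laurent tail at $y_\tau$ and every summand coming from a matrix outside $S_{\tau_0}$ contribute only smooth $\tau$-dependence near $\tau_0$. Secondary difficulties are careful sign-tracking (the identity $(-i)^{n+1}=-i^{1-n}$ is easy to mis-handle) and the treatment of intersection points of $E_1$, where several $\Gamma$-translates of $i\R_+$ pass through the same $\tau_0$ and several matrices lie in $S_{\tau_0}$ simultaneously; this is precisely why the asserted singularity is written as a sum over the full set $S_{\tau_0}$.
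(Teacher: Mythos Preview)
Your approach is essentially the same as the paper's. For $\widetilde{\mathcal{H}}_{1-k,n}$ the paper also reduces everything to the sign sum from Lemma~\ref{lemma alternative polynomial}; for $\mathcal{H}_{1-k,n}$ both arguments isolate the simple pole of the $M_0$-summand at $z=M_0\tau$ and read off the jump from its residue.

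Two small remarks. First, ``real-analytic, hence harmonic'' is not a valid implication; the correct reason is that $\mathcal{E}_{R_n}$ and $\mathcal{E}_{E_{2k}}$ are holomorphic while $R_n^*$ satisfies $\xi_{2-2k}R_n^*=R_n\in S_{2k}$, so $\Delta_{2-2k}R_n^*=-\xi_{2k}R_n=0$. Second, your ``main obstacle'' (justifying Sokhotski--Plemelj term-by-term inside the Poincar\'e series) is exactly what the paper sidesteps: it first subtracts from $H_{k,k-1}(z,\tau)$ the finite sum
\[
G_{\tau_0}(z,\tau):=\sum_{\substack{M\in\Gamma\\ \Re(M\tau_0)=0}}\left(\frac{v^{2k-1}}{(z-\tau)(z-\overline{\tau})^{2k-1}}\right)\Big|_{2-2k,\tau}M,
\]
so that $\int_0^\infty(H_{k,k-1}-G_{\tau_0})(iy,\tau)y^n\,dy$ is manifestly harmonic near $\tau_0$, and then analyses the remaining finite sum $\int_0^\infty G_{\tau_0}(iy,\tau)y^n\,dy$ by shifting the $z$-contour from $i\R_+$ towards $\pm\infty$ and averaging, which is equivalent to your Sokhotski--Plemelj step but with no interchange to justify.
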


\begin{proof}
We start with the function $\widetilde{\mathcal{H}}_{1-k,n}(\tau)$. Note that the weight $2-2k$ invariant Laplace operator can be written as $\Delta_{2-2k} = -\xi_{2k}\circ \xi_{2-2k}$ and that $\xi_{2k}$ annihilates holomorphic functions. Hence, the action \eqref{eq diffops eichler} of $\xi_{2-2k}$ on Eichler integrals and the fact that $\mathcal{P}_{1-k,n}(\tau)$ is a polynomial on each connected component of $\H\setminus \bigcup_{M \in \Gamma}M(i\R_{+})$ imply that $\widetilde{\mathcal{H}}_{1-k,n}(\tau)$ is harmonic on this set. The singularities of $\widetilde{\mathcal{H}}_{1-k,n}(\tau)$ come from the sum in the locally polynomial part $\mathcal{P}_{1-k,n}(\tau)$ and are given by the stated formula by Lemma~\ref{lemma alternative polynomial}.

Now we consider the function $\mathcal{H}_{1-k,n}(\tau)$. Since the function $\tau \mapsto H_{k,k-1}(z,\tau)$ is harmonic on $\H \setminus \Gamma z$, the function $\mathcal{H}_{1-k,n}(\tau)$ is harmonic on $\H \setminus \bigcup_{M \in \Gamma}M(i\R_{+})$. To determine the singularities, we keep $\tau_{0}\in\H$ fixed and consider the function
\[
G_{\tau_0}(z, \tau) := \sum_{\substack{M\in\Gamma \\ \Re(M\tau_0) = 0}} \left(\frac{v^{2k-1}}{(z-\tau)(z-\overline{\tau})^{2k-1}}\right)\Big|_{2-2k, \tau} M.  
\]
Note that $\Re(M\tau_0)=0$ means that $\tau_0$ lies on the geodesic $M^{-1}(i\R_+)$, hence the above sum is finite (see also Figure 1 in the introduction). Furthermore, the function $G_{\tau_0}(z,\tau)$ is meromorphic in $z$ and harmonic in $\tau$ on $\H\setminus\Gamma z$. We split $\mathcal{H}_{1-k,n}(\tau)$ into 
\[
\mathcal{H}_{1-k,n}(\tau)  =\frac{(2i)^{2k-2}}{2\pi}\left(\int_0 ^\infty (H_{k,k-1}(iy,\tau) - G_{\tau_0}(iy,\tau))y^ndy + \int_0 ^\infty G_{\tau_0}(iy,\tau)y^ndy\right).
\] 
The function 
\[
z\mapsto H_{k,k-1}(z,\tau) - G_{\tau_0}(z,\tau)
\]
is meromorphic and has no singularities near $\tau_0$ and the function 
\[
\tau \mapsto \int_0 ^\infty (H_{k,k-1}(iy,\tau) - G_{\tau_0}(iy,\tau))y^ndy
\]
is harmonic in a neighborhood of $\tau_0$. For the second summand we compute for any $\tau\notin E_1$
\[
\int_0 ^\infty G_{\tau_0}(iy,\tau)y^ndy =(-i)^{n+1}\sum_{\substack{M\in\Gamma \\ \Re(M\tau_0)=0}}\left(v^{2k-1}\int_0 ^{i\infty} \frac{z^ndz}{(z-\tau)(z-\overline{\tau})^{2k-1}}\right)\Big|_{2-2k, \tau} M.  
\]
We now evaluate the inner integral for fixed $\tau$. If we shift the path of integration to the left towards $-\infty$, we pick up a residue $2\pi i\frac{\tau^n}{(2iv)^{2k-1}}$ if $\Re(\tau) <0$. If we shift towards $\infty$, we pick up a residue of $-2\pi i\frac{\tau^n}{(2iv)^{2k-1}}$ if $\Re(\tau) >0$. Thus we get
\begin{multline*}
\int_0 ^\infty G_{\tau_0}(iy,\tau)y^ndy =(-i)^{n+1}\sum_{\substack{M\in\Gamma \\ \Re(M\tau_0)=0}}\Bigl(\frac{v^{2k-1}}{2}\left(\int_{-\infty}^0-\int_0 ^\infty\right)\frac{z^ndz}{(z-\tau)(z-\overline{\tau})^{2k-1}} \\
-\sgn(u)\pi i\frac{\tau^n}{(2i)^{2k-1}}\Bigr)\Big|_{2-2k, \tau} M.  
\end{multline*}
The function
\[
\tau\mapsto \left(\int_{-\infty}^0-\int_0 ^\infty\right)\frac{z^ndz}{(z-\tau)(z-\overline{\tau})^{2k-1}}
\]
is harmonic on $\H$, so it does not contribute to the singularity. The sum over the signed terms yields the claimed singularity.
\end{proof}

We can now finish the proof of Theorem~\ref{theorem splitting}. It follows from Proposition~\ref{proposition modularity Htilde} and Lemma~\ref{jumps} that $\widetilde{\mathcal{H}}_{1-k,n}(\tau)$ is a locally harmonic Maass form of weight $2-2k$ with the same singularities as $\mathcal{H}_{1-k,n}(\tau)$, i.e., the difference $\mathcal{H}_{1-k,n}(\tau) - \widetilde{\mathcal{H}}_{1-k,n}(\tau)$ transforms like a modular form of weight $2-2k$ and is harmonic on all of $\H$. Furthermore, it follows from \eqref{eq diffops eichler} and Proposition~\ref{proposition diffops} that $\mathcal{H}_{1-k,n}(\tau) - \widetilde{\mathcal{H}}_{1-k,n}(\tau)$ is annihilated by $\xi_{2-2k}$ and $\mathcal{D}^{2k-1}$, which implies that it is a polynomial on $\H$. But the only one-periodic polynomials on $\H$ are the constant functions, and the only constant function which transforms like a modular form of non-zero weight is the constant 0 function. This shows $\mathcal{H}_{1-k,n}(\tau) = \widetilde{\mathcal{H}}_{1-k,n}(\tau)$ and concludes the proof of Theorem~\ref{theorem splitting}.

\section{The proof of Theorem~\ref{theorem rationality} and Proposition~\ref{proposition special value}}\label{section proof rationality}

Let $P \in \mathcal{Q}_d$ be a positive definite binary quadratic form and let $\tau_P \in \H$ be the associated CM point defined by $P(\tau_P,1) = 0$. For simplicity, we assume throughout this section that $\tau_P$ does not lie on any $\Gamma$-translate of the imaginary axis $i\R_+$ and leave the necessary adjustments in the general case to the reader. 

We start with the proof of Proposition~\ref{proposition special value}. We can write
\[
P(z,1) = \frac{\sqrt{|d|}}{2\Im(\tau_P)}(z-\tau_P)(z-\overline{\tau}_P),
\]
which implies
\begin{align*}
H_{k,0}(z,\tau_P) &= \sum_{M \in \Gamma}j(M,z)^{-2k}\left(\frac{(Mz - \tau_P)(Mz - \overline{\tau}_P)}{\Im(\tau_P)} \right)^{-k} \\
&= 2^{-k}|d|^{\frac{k}{2}}\sum_{M \in \Gamma}j(M,z)^{-2k}P(Mz,1)^{-k} = 2^{1-k}|d|^\frac{1-k}{2}|\overline{\Gamma}_P|\pi f_{k,P}(z).
\end{align*}
Using $(k-1)!H_{k,0}(z,\tau) = R_{2-2k,\tau}^{k-1}H_{k,k-1}(z,\tau)$ we obtain
\begin{align*}
r_n(f_{k,P}) &= \int_0^\infty f_{k,P}(iy)y^n dy  \\
&= \frac{2^{k-1}|d|^\frac{k-1}{2}}{|\overline{\Gamma}_P| \pi }\int_0^\infty H_{k,0}(iy,\tau_P)y^n dy  \\
&= \frac{2^{k-1}|d|^\frac{k-1}{2}}{(k-1)!|\overline{\Gamma}_P| \pi }\int_0^\infty R_{2-2k,\tau}^{k-1}H_{k,k-1}(iy,\tau)\big|_{\tau = \tau_P}y^n dy  \\
&=\frac{(-1)^{k-1}|d|^\frac{k-1}{2}}{2^{k-2}(k-1)!|\overline{\Gamma}_P|}R_{2-2k}^{k-1}\mathcal{H}_{1-k,n}(\tau_P).
\end{align*}
This finishes the proof of Proposition~\ref{proposition special value}.

Before we come to the proof of Theorem~\ref{theorem rationality}, we give a general formula for the periods $r_n(f_{k,P})$ of the meromorphic modular forms $f_{k,P}(z)$. 

\begin{theorem}\label{theorem rn formula}
	Assume that $\tau_P \in \H \setminus \bigcup_{M \in \Gamma}M(i\R_{+})$ and $0 < \Re(\tau_P) < 1$. Then we have the formula
	\begin{align*}
	r_n(f_{k,P}) &=  \frac{|d|^{\frac{k-1}{2}}}{|\overline{\Gamma}_P|}\bigg(\frac{(-1)^{k-1}}{2^{k-2}(k-1)!}R_{2-2k}^{k-1}\mathcal{P}_{1-k,n}(\tau_P)  \\
	&\qquad \qquad \quad +\frac{(-1)^{k}}{2^{k-2}(k-1)!}\left((-1)^{n+1}\overline{R_{2-2k}^{k-1}R_n^*(\tau_P)}+ R_{2-2k}^{k-1}R_n^*(\tau_P)\right) \\
	&\qquad \qquad \quad +(\delta_{n=0}+(-1)^k \delta_{n=2k-2})\frac{2^{k+1}(2k-2)!}{(4\pi)^{2k-1}(k-1)!}R_{2-2k}^{k-1}\mathcal{E}_{E_{2k}}(\tau_P)\bigg),
	\end{align*}
	where $R_{2-2k}^{k-1}\mathcal{P}_{1-k,n}(\tau)$ is explicitly given by
	\begin{align*}
	R_{2-2k}^{k-1}\mathcal{P}_{1-k,n}(\tau) &= (-v)^{1-k}\frac{(2k-2)!}{(k-1)!}r_n(E_{2k}) \\
	&\quad + \frac{(-i)^{n+1}}{n+1}R_{2-2k}^{k-1}B_{n+1}(\tau) + \frac{i^{n+1}}{2k-1-n}R_{2-2k}^{k-1}B_{2k-1-n}(\tau) \\
	&\quad + \frac{i^{1-n}}{2}\sum_{\substack{M \in \Gamma \\ ac > 0, \Re(M\tau) \leq 0}}\left(R_{2-2k}^{k-1}\tau^{n}-(-1)^n R_{2-2k}^{k-1}\tau^{2k-2-n} \right)\bigg|_0 M.
	\end{align*}
\end{theorem}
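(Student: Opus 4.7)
The plan is to take Proposition~\ref{proposition special value} as the starting point and systematically push the iterated raising operator $R_{2-2k}^{k-1}$ through the decomposition of $\mathcal{H}_{1-k,n}$ provided by Theorem~\ref{theorem splitting}. This reduces the claim to four separate computations, one for each summand on the right-hand side of \eqref{split}, evaluated at $\tau = \tau_P$. By linearity of $R_{2-2k}^{k-1}$, the only real work is to identify each image and to collect the constants produced by Proposition~\ref{proposition special value}.

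First I would handle the two Eichler-integral contributions. The term $-R_n^*$ is mapped to $-R_{2-2k}^{k-1}R_n^*(\tau_P)$, which is the second term in the target formula. For the holomorphic Eichler integral, I would apply Proposition~\ref{proposition eichler integral relations} to rewrite
\[
(-1)^{n+1}\frac{(2k-2)!}{(4\pi)^{2k-1}} R_{2-2k}^{k-1}\mathcal{E}_{R_n}(\tau_P) = (-1)^{n}\,\overline{R_{2-2k}^{k-1}R_n^*(\tau_P)},
\]
giving exactly the first term in the inner parenthesis. The Eisenstein piece is carried through unchanged, with the constants $\frac{1}{(2\pi)^{2k-1} \cdot 2^{k-2}} = \frac{2^{k+1}}{(4\pi)^{2k-1}}$ collapsing into the stated coefficient. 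At this point everything except $R_{2-2k}^{k-1}\mathcal{P}_{1-k,n}(\tau_P)$ already matches.

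Next I would compute $R_{2-2k}^{k-1}\mathcal{P}_{1-k,n}$. The hypothesis $\tau_P \in \H\setminus\bigcup_{M\in\Gamma}M(i\R_+)$ together with $0 < \Re(\tau_P) < 1$ ensures that on a neighborhood of $\tau_P$ the $1$-periodic Bernoulli function $\mathbb{B}_m$ agrees with the honest polynomial $B_m$, and that the sign condition $\Re(M\tau) < 0$ in the finite sum is locally constant, so differentiation makes sense. I would treat the four pieces separately: (a) for the constant $r_n(E_{2k})$, an easy induction using $R_\kappa = v^{-\kappa}(2i\partial_\tau)v^\kappa$ gives $R_{2-2k}^{k-1}(1) = (-v)^{1-k}\tfrac{(2k-2)!}{(k-1)!}$, producing the first line of the formula for $R_{2-2k}^{k-1}\mathcal{P}_{1-k,n}$; (b) for the Bernoulli terms, I simply leave $R_{2-2k}^{k-1}B_{n+1}$ and $R_{2-2k}^{k-1}B_{2k-1-n}$ symbolic in the statement; (c) for the finite sum, I invoke the intertwining identity $R_{2-2k}^{k-1}(f|_{2-2k}M) = (R_{2-2k}^{k-1}f)|_0 M$ (which follows iteratively from $R_\kappa(f|_\kappa M) = (R_\kappa f)|_{\kappa+2}M$) to bring the raising operator inside the slash action. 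The relabeling $\Re(M\tau) \leq 0$ versus $< 0$ is immaterial, as the assumption on $\tau_P$ forbids equality.

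The main technical care is with sign and weight bookkeeping: the constant in Proposition~\ref{proposition special value} is $\frac{(-1)^{k-1}|d|^{(k-1)/2}}{2^{k-2}(k-1)!|\overline{\Gamma}_P|}$, and applying Proposition~\ref{proposition eichler integral relations} flips a sign and a complex conjugate in exactly the way needed to turn $(-1)^{k-1}(-1)^{n+1}$ into $(-1)^k(-1)^{n+1}$ paired with a $\overline{(\cdot)}$, which is the principal subtlety. Once these bookkeeping identities are laid out, the remaining manipulations are routine, and collecting the four contributions yields the asserted formula.
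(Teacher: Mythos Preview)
Your proposal is correct and follows essentially the same approach as the paper's own proof: start from Proposition~\ref{proposition special value}, apply $R_{2-2k}^{k-1}$ to the splitting in Theorem~\ref{theorem splitting}, invoke Proposition~\ref{proposition eichler integral relations} to convert the holomorphic Eichler integral of $R_n$ into $-\overline{R_{2-2k}^{k-1}R_n^*}$, and compute $R_{2-2k}^{k-1}$ on the constant $r_n(E_{2k})$ via $R_\kappa v^j=(j+\kappa)v^{j-1}$. Your added remarks on the intertwining $R_{2-2k}^{k-1}(f|_{2-2k}M)=(R_{2-2k}^{k-1}f)|_0 M$ and on why $\Re(M\tau)\leq 0$ versus $<0$ is immaterial under the hypothesis are sound and make explicit points the paper leaves implicit.
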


\begin{proof}
First, by Proposition~\ref{proposition special value} we have
\[
r_n(f_{k,P}) = \frac{(-1)^{k-1}|d|^\frac{k-1}{2}}{2^{k-2}(k-1)!|\overline{\Gamma}_P|}R_{2-2k}^{k-1}\mathcal{H}_{1-k,n}(\tau_P).
\]
Furthermore, by Theorem~\ref{theorem splitting} the locally harmonic Maass form $\mathcal{H}_{1-k,n}(\tau)$ has the splitting
\begin{align*}
\mathcal{H}_{1-k,n}(\tau) &= \mathcal{P}_{1-k,n}(\tau) + (-1)^{n+1}\frac{(2k-2)!}{(4\pi )^{2k-1}}\mathcal{E}_{R_n}(\tau) - R_{n}^*(\tau) \\
 &\quad-((-1)^{k}\delta_{n=0}+\delta_{n=2k-2})\frac{(2k-2)!}{(2\pi)^{2k-1}}\mathcal{E}_{E_{2k}}(\tau).
\end{align*}
By Proposition~\ref{proposition eichler integral relations}, we can rewrite 
\[
\frac{(2k-2)!}{(4\pi)^{2k-1}}R_{2-2k}^{k-1}\mathcal{E}_{R_n}(\tau) = -\overline{R_{2-2k}^{k-1}R_n^*(\tau)}.
\]
Finally, using the formula $R_{\kappa}v^j = (j+\kappa)v^{j-1}$ we can compute the action of the iterated raising operator on the constant $r_n(E_{2k})$ in the locally polynomial part $\mathcal{P}_{1-k,n}(\tau)$. This finishes the proof.
\end{proof}

Note that the Fourier expansion of the raised Eichler integrals appearing in Theorem~\ref{theorem rn formula} can be computed using Proposition~\ref{proposition eichler integral relations} and \eqref{raising and derivative}. In order to understand the algebraic nature of the expressions appearing in $R_{2-2k}^{k-1}\mathcal{P}_{1-k,n}(\tau_P)$, the following formula will be useful.

\begin{lemma}\label{lemma raising taupower}
	For $k \in \N$, $0 \leq \ell \leq 2k-1$, and $\tau = u+iv \in \H$ we have the formula
	\begin{align*}
	R_{2-2k}^{k-1}\tau^\ell &= (-v)^{1-k}\sum_{j=0}^{\min\{k-1,\ell\}}\binom{\ell}{j}\frac{(2k-2-j)!}{(k-1-j)!}(-2)^j\sum_{\substack{\alpha = 0 \\ \ell-\alpha \text{ even }}}^{\ell-j}\binom{\ell-j}{\alpha}u^{\alpha}(iv)^{\ell-\alpha} \\
&\quad + i\delta_{\ell=2k-1}(-1)^{k-1}2^{2k-2}(k-1)!v^k.
	\end{align*}
\end{lemma}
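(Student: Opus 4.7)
The proof naturally splits into two phases: first, I would derive a closed formula for $R_{2-2k}^{k-1}\tau^{\ell}$ via a general iterated-raising identity; second, I would rearrange that formula into the asymmetric shape stated in the lemma, with the odd-parity remainder collapsing to the delta term via a combinatorial identity.

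For Phase 1, I plan to first establish by induction on $n$ the general identity
\[
R_{\kappa}^{n}f = \sum_{r=0}^{n}\binom{n}{r}\frac{\Gamma(n+\kappa)}{\Gamma(r+\kappa)}v^{r-n}(2i\partial_{\tau})^{r}f,
\]
where the Gamma ratio is interpreted as the finite product $(r+\kappa)(r+\kappa+1)\cdots(n+\kappa-1)$ in $\kappa$; the inductive step reduces, after dividing out the common Gamma ratio, to the binomial identity $\binom{n}{r}(r+n+\kappa)+\binom{n}{r-1}(r-1+\kappa)=(n+\kappa)\binom{n+1}{r}$, which is Pascal's rule in disguise. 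Specializing $\kappa = 2-2k$, $n = k-1$, $f = \tau^{\ell}$, and using $(2i\partial_{\tau})^{r}\tau^{\ell} = (2i)^{r}\ell!/(\ell-r)!\,\tau^{\ell-r}$ together with the evaluation $\Gamma(1-k)/\Gamma(r+2-2k) = (-1)^{k-1-r}(2k-2-r)!/(k-1)!$ and the identity $\binom{k-1}{r}\ell!/(\ell-r)! = \binom{\ell}{r}(k-1)!/(k-1-r)!$, the signs combine cleanly to give the intermediate form
\[
R_{2-2k}^{k-1}\tau^{\ell} = (-v)^{1-k}\sum_{r=0}^{\min\{k-1,\ell\}}\binom{\ell}{r}\frac{(2k-2-r)!}{(k-1-r)!}(-2iv)^{r}\tau^{\ell-r}.
\]

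For Phase 2, I would write $(-2iv)^{r} = (-2)^{r}(iv)^{r}$, expand $\tau^{\ell-r} = \sum_{\alpha=0}^{\ell-r}\binom{\ell-r}{\alpha}u^{\alpha}(iv)^{\ell-r-\alpha}$, and absorb the stray $(iv)^{r}$ into the $(iv)$-power, turning the intermediate formula into
\[
R_{2-2k}^{k-1}\tau^{\ell}=(-v)^{1-k}\sum_{r,\alpha}\binom{\ell}{r}\frac{(2k-2-r)!}{(k-1-r)!}(-2)^{r}\binom{\ell-r}{\alpha}u^{\alpha}(iv)^{\ell-\alpha}.
\]
Splitting the inner $\alpha$-sum by the parity of $\ell-\alpha$, the even-parity piece is literally the main sum in the statement of the lemma, so everything reduces to showing that the odd-parity piece equals $i\delta_{\ell,2k-1}(-1)^{k-1}2^{2k-2}(k-1)!v^{k}$.

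The main obstacle is this last step, a purely combinatorial identity. Swapping orders with $\beta = \ell-\alpha$ odd and using $\binom{\ell}{r}\binom{\ell-r}{\ell-\beta}=\binom{\ell}{\beta}\binom{\beta}{r}$, the odd-parity contribution rearranges to
\[
i(-v)^{1-k}\sum_{\substack{0\leq\beta\leq\ell\\ \beta\text{ odd}}}(-1)^{(\beta-1)/2}\binom{\ell}{\beta}u^{\ell-\beta}v^{\beta}\,S_{\beta}^{(k)},\quad S_{\beta}^{(k)} := \sum_{r=0}^{k-1}\binom{\beta}{r}\frac{(2k-2-r)!}{(k-1-r)!}(-2)^{r}.
\]
The key claim is that $S_{\beta}^{(k)} = \delta_{\beta,2k-1}(-1)^{k-1}2^{2k-2}(k-1)!$ for odd $\beta$ in $[0,2k-1]$. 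Using $\frac{(2k-2-r)!}{(k-1-r)!} = (k-1)!\binom{2k-2-r}{k-1-r}$ and the coefficient-extraction identity $\binom{2k-2-r}{k-1-r} = [x^{k-1-r}](1+x)^{2k-2-r}$, a short manipulation collapses the sum over $r$ into
\[
S_{\beta}^{(k)} = (k-1)!\cdot[x^{k-1}](1+x)^{2k-2-\beta}(1-x)^{\beta}.
\]
For $\beta\leq 2k-2$, the degree-$(2k-2)$ polynomial $p(x):=(1+x)^{2k-2-\beta}(1-x)^{\beta}$ satisfies the reciprocity $x^{2k-2}p(1/x) = (-1)^{\beta}p(x)$; for $\beta$ odd this is anti-palindromic, which forces the middle coefficient $[x^{k-1}]p(x)$ to vanish. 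For $\beta = 2k-1$, direct expansion of $(1-x)^{2k-1}/(1+x)$ together with the symmetry $\sum_{j=0}^{k-1}\binom{2k-1}{j} = 2^{2k-2}$ gives the stated non-zero value. Substituting back, the only surviving odd-parity term is $\beta=\ell=2k-1$, and after tracking the signs from $(-1)^{(\beta-1)/2}=(-1)^{k-1}$ and $(-v)^{1-k}v^{2k-1}=(-1)^{k-1}v^{k}$ one recovers exactly the delta term in the lemma.
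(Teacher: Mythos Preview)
Your argument is correct. Phase~1 reproduces exactly the intermediate expansion the paper also obtains (the paper simply quotes the iterated-raising formula from Zagier rather than re-deriving it by induction, but your version and the paper's equation \eqref{raising and derivative} are the same identity). The genuine difference is in Phase~2. The paper does \emph{not} attack the odd-parity piece combinatorially; instead it computes $R_{2-2k}^{k-1}\tau^{\ell}$ a second time by writing $\tau=\overline{\tau}+2iv$, expanding binomially, and using that the raising operator acts on the anti-holomorphic factors only through $R_{\kappa}v^{j}=(j+\kappa)v^{j-1}$. Comparing the two expressions shows that $R_{2-2k}^{k-1}\tau^{\ell}$ minus the explicit $\delta_{\ell=2k-1}$ term is \emph{real}, which forces all the terms with $\ell-\alpha$ odd in the binomial expansion of $\tau^{\ell-j}$ to cancel automatically. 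Your route replaces this reality trick with the generating-function identity $S_{\beta}^{(k)}=(k-1)!\,[x^{k-1}](1+x)^{2k-2-\beta}(1-x)^{\beta}$ and the anti-palindromicity observation, which is a clean and self-contained combinatorial argument. The paper's approach is shorter and more conceptual (once you see the second computation, the parity restriction falls out for free), while yours avoids the need for a second evaluation of the raised power and makes the vanishing mechanism completely explicit; it would also generalize more readily to settings where no such conjugate computation is available.
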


\begin{proof}
We first apply the formula
\begin{align}\label{raising and derivative}
R_{2-2k}^{k-1}f(\tau) = (-v)^{1-k}(k-1)!\sum_{j=0}^{k-1}\frac{(-2iv)^j}{j!}\binom{2k-2-j}{k-1}\frac{\partial^j}{\partial \tau^j}f(\tau),
\end{align}
which holds for any smooth function $f: \H \to \C$ (see equation (56) in Zagier's part of \cite{zagier123}). This yields
\begin{align}\label{eq raising taupower}
R_{2-2k}^{k-1}\tau^\ell = (-v)^{1-k}\sum_{j=0}^{\min\{k-1,\ell\}}\binom{\ell}{j}\frac{(2k-2-j)!}{(k-1-j)!}(-2iv)^j\tau^{\ell-j}.
\end{align}
On the other hand, using $R_{\kappa}v^j = (j+\kappa)v^{j-1}$ we compute
\begin{align*}
R_{2-2k}^{k-1}\tau^\ell &= R_{2-2k}^{k-1}(\overline{\tau}+2iv)^\ell = R_{2-2k}^{k-1} \sum_{j=0}^{\ell}\binom{\ell}{j}(2iv)^j\overline{\tau}^{\ell-j} \\
&= (-v)^{1-k}\sum_{j=0}^{\ell}\binom{\ell}{j}(2iv)^j\overline{\tau}^{\ell-j} (2k-2-j)(2k-3-j)\cdots (k-j).
\end{align*}
Note that $(2k-2-j)(2k-3-j)\cdots (k-j)$ equals $\frac{(2k-2-j)!}{(k-1-j)!}$ for $0 \leq j \leq k-1$, it vanishes for $k \leq j \leq 2k-2$, and it equals $(-1)^{k-1}(k-1)!$ if $j = 2k-1$, which can occur only if $\ell = 2k-1$. Hence we obtain
\begin{align*}
R_{2-2k}^{k-1}\tau^\ell &= (-v)^{1-k}\sum_{j=0}^{\min\{k-1,\ell\}}\binom{\ell}{j}\frac{(2k-2-j)!}{(k-1-j)!}(2iv)^j\overline{\tau}^{\ell-j}+\delta_{\ell=2k-1}(2i)^{2k-1}(k-1)!v^{k}.
\end{align*}
Comparing this with \eqref{eq raising taupower}, we see that
\[
R_{2-2k}^{k-1}\tau^\ell - \frac{1}{2}\delta_{\ell=2k-1}(2i)^{2k-1}(k-1)!v^k
\]
is real. Thus, expanding $\tau^{\ell-j}$ in \eqref{eq raising taupower} using the binomial theorem, we obtain
\begin{align*}
R_{2-2k}^{k-1}\tau^\ell 
&= (-v)^{1-k}\sum_{j=0}^{\min\{k-1,\ell\}}\binom{\ell}{j}\frac{(2k-2-j)!}{(k-1-j)!}(-2)^j\sum_{\alpha = 0}^{\ell-j}\binom{\ell-j}{\alpha}u^{\alpha}(iv)^{\ell-\alpha} \\
&= (-v)^{1-k}\sum_{j=0}^{\min\{k-1,\ell\}}\binom{\ell}{j}\frac{(2k-2-j)!}{(k-1-j)!}(-2)^j\sum_{\substack{\alpha = 0 \\ \ell-\alpha \text{ even }}}^{\ell-j}\binom{\ell-j}{\alpha}u^{\alpha}(iv)^{\ell-\alpha} \\
&\quad + i\delta_{\ell=2k-1}(-1)^{k-1}2^{2k-2}(k-1)!v^k.
\end{align*}
Here we used that the terms with odd $\ell-\alpha$ would be purely imaginary and hence cannot occur, apart from possibly the summand for $\alpha = 0$ in the case $\ell = 2k-1$, which is a purely imaginary multiple of $v^k$ and hence has to be equal to $i(-1)^{k-1}2^{2k-2}(k-1)!v^k$. This finishes the proof.
\end{proof}

We obtain the following rationality result for the special values $R_{2-2k}^{k-1}\mathcal{P}_{1-k,n}(\tau_P)$. 

\begin{lemma}\label{lemma P rational}
	We have
	\begin{align*}
|d|^{\frac{k-1}{2}}R_{2-2k}^{k-1}\mathcal{P}_{1-k,n}(\tau_P) \in \begin{cases}\Q, & \text{if $0 < n < 2k-2$ is odd,} \\
i\Q, & \text{if $0 < n < 2k-2$ is even.}
\end{cases}
\end{align*}
\end{lemma}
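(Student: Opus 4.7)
The plan is to exploit the explicit formula for $R_{2-2k}^{k-1}\mathcal{P}_{1-k,n}(\tau)$ provided by Theorem~\ref{theorem rn formula} and reduce the claim to a rationality statement for $R_{2-2k}^{k-1}\tau^\ell$ evaluated at CM points of discriminant $d$. Since $0 < n < 2k-2$, every exponent $\ell$ that actually appears -- namely $\ell \in \{0,1,\dots,n+1\}$ from $B_{n+1}(\tau)$, $\ell \in \{0,1,\dots,2k-1-n\}$ from $B_{2k-1-n}(\tau)$, and $\ell \in \{n,2k-2-n\}$ from the finite sum -- satisfies $\ell \leq 2k-2$. In particular the anomalous $\delta_{\ell=2k-1}$ term in Lemma~\ref{lemma raising taupower} never contributes, and after noting that $(iv)^{\ell-\alpha} = (-1)^{(\ell-\alpha)/2}v^{\ell-\alpha}$ for even $\ell-\alpha$, one obtains
\[
R_{2-2k}^{k-1}\tau^\ell = (-v)^{1-k}\, Q_\ell(u, v^2)
\]
for some polynomial $Q_\ell$ with integer coefficients.

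The second step is to evaluate at $\tau = M\tau_P$ for $M \in \Gamma$. Because $\Gamma$ permutes the CM points of discriminant $d$, we have $M\tau_P = \tau_Q$ for some $Q = [a',b',c'] \in \mathcal{Q}_d$, so that $\Re(M\tau_P) = -b'/(2a') \in \Q$ and $\Im(M\tau_P)^2 = |d|/(4{a'}^2) \in \Q$. Hence $Q_\ell$ takes a rational value there, while $\Im(M\tau_P)^{1-k} = |d|^{(1-k)/2}(2a')^{k-1}$, so multiplication by $|d|^{(k-1)/2}$ cancels the only irrational factor and yields
\[
|d|^{(k-1)/2}\, R_{2-2k}^{k-1}\tau^\ell\big|_{\tau=M\tau_P} \in \Q, \qquad 0 \le \ell \le 2k-2.
\]
The leading summand $(-v_P)^{1-k}\frac{(2k-2)!}{(k-1)!}r_n(E_{2k})$ of $R_{2-2k}^{k-1}\mathcal{P}_{1-k,n}(\tau_P)$ is handled similarly: for even $n$ with $0 < n < 2k-2$ it vanishes by \eqref{periodEisenstein}, and for odd such $n$ it is rational after rescaling by $|d|^{(k-1)/2}$.

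Combining these facts in the formula of Theorem~\ref{theorem rn formula}, the Bernoulli polynomials $B_{n+1}(\tau)$ and $B_{2k-1-n}(\tau)$ have rational coefficients, so both $|d|^{(k-1)/2}R_{2-2k}^{k-1}B_{n+1}(\tau_P)$ and $|d|^{(k-1)/2}R_{2-2k}^{k-1}B_{2k-1-n}(\tau_P)$ lie in $\Q$. The $M$-sum is finite (the conditions $ac > 0$ and $\Re(M\tau_P) \le 0$ pick out only the finitely many semi-circles of $E_1$ enclosing $\tau_P$), and each summand becomes rational after rescaling by $|d|^{(k-1)/2}$. All powers of $i$ are concentrated in the numerical prefactors $(-i)^{n+1}$, $i^{n+1}$, and $i^{1-n}$: for odd $n$ the exponents $n+1$ and $1-n$ are both even, so each prefactor belongs to $\{\pm 1\}$ and the full expression lies in $\Q$; for even $n$ with $0 < n < 2k-2$ the Eisenstein contribution vanishes and the remaining prefactors lie in $\{\pm i\}$, so the expression belongs to $i\Q$.

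The only real obstacle is bookkeeping: verifying within Lemma~\ref{lemma raising taupower} that the parity constraint "$\ell-\alpha$ even" really makes $Q_\ell$ a rational (in fact integer-coefficient) polynomial in $u$ and $v^2$, and tracking that the $i$-prefactors combine exactly as described. No analytic input beyond Theorem~\ref{theorem rn formula}, Lemma~\ref{lemma raising taupower}, and the explicit CM description of $\tau_P$ is required.
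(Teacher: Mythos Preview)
Your argument is correct and follows essentially the same route as the paper: both proofs apply the explicit formula for $R_{2-2k}^{k-1}\mathcal{P}_{1-k,n}$ from Theorem~\ref{theorem rn formula}, invoke Lemma~\ref{lemma raising taupower} to see that $|d|^{(k-1)/2}R_{2-2k}^{k-1}\tau^\ell$ is rational at CM points of discriminant $d$ for $0\le\ell\le 2k-2$, use that $M\tau_P$ is again such a CM point, and then track the powers of $i$ together with the vanishing/rationality of $r_n(E_{2k})$ from \eqref{periodEisenstein}. Your write-up is slightly more explicit than the paper's in observing that the restriction $0<n<2k-2$ forces every occurring exponent $\ell$ to be at most $2k-2$, so the $\delta_{\ell=2k-1}$ correction in Lemma~\ref{lemma raising taupower} never enters, and in spelling out the parity bookkeeping for the prefactors $(-i)^{n+1}$, $i^{n+1}$, $i^{1-n}$.
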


\begin{proof}
If we write $P=[a,b,c]$ then the corresponding CM point is given by 
\[
\tau_P = -\frac{b}{2a}+i\frac{\sqrt{|d|}}{2a}.
\]
Noting that $B_m(\tau) = \sum_{\ell=0}^m \binom{m}{\ell}B_{m-\ell}\tau^\ell$, we see from Lemma~\ref{lemma raising taupower} that 
\begin{align}\label{eq rational 1}
|d|^{\frac{k-1}{2}}R_{2-2k}^{k-1}B_m(\tau_P) \in \Q
\end{align}
for every $0 \leq m \leq 2k-2$, and 
\begin{align}\label{eq rational 2}
|d|^{\frac{k-1}{2}}R_{2-2k}^{k-1}B_{2k-1}(\tau_P) = C + i|d|^{\frac{k-1}{2}}(-1)^{k-1}2^{2k-2}(k-1)!v_P^k
\end{align}
for some constant $C \in \Q$. By the same lemma we see that 
\begin{align}\label{eq rational 3}
|d|^{\frac{k-1}{2}}R_{2-2k}^{k-1}\tau^\ell \in \Q
\end{align} for every $0 \leq \ell \leq 2k-2$ and every CM point $\tau$ of discriminant $d$. Note that $M\tau_P$ is a CM point of discriminant $d$ for every $M \in \Gamma$. Recall from Section~\ref{section prelims periods} that $r_n(E_{2k})$ vanishes for even $0 < n < 2k-2$ and is rational for odd $n$. Summarizing, we obtain the stated rationality result.
\end{proof}

Now we come to the proof of Theorem~\ref{theorem rationality}. First, it follows from the definition of $f_{k,P}(z)$ that 
\[
f_{k,P'}(iy) = \overline{f_{k,P}(iy)},
\]
where we put $P' = [a,-b,c]$ for $P = [a,b,c]$. This implies
\begin{align}\label{eq real imag}
r_n(f_{k,P}^{+}) = \Re(r_n(f_{k,P})), \qquad r_n(f_{k,P}^{-}) = -\Im(r_n(f_{k,P})).
\end{align}

It is now easy to see from Theorem~\ref{theorem rn formula} and Lemma~\ref{lemma P rational} that $r_n(f_{k,P})$ for $0 < n < 2k-2$ is a real number if $n$ is odd, and a purely imaginary number if $n$ is even. Therefore, \eqref{eq real imag} implies $r_n(f_{k,P}^\varepsilon) = 0$ for $0 < n < 2k-2$, where $\varepsilon$ is the sign of $(-1)^n$. This concludes the proof of the first part of Theorem~\ref{theorem rationality}.

Concerning the period $r_0(f_{k,P}^+) = \Re(r_0(f_{k,P}))$, we obtain from Theorem~\ref{theorem rn formula} and equations \eqref{eq rational 1}--\eqref{eq rational 3} the formula
\begin{align*}
&r_0(f_{k,P}^+) = -\frac{2^{k-1}|d|^{\frac{k-1}{2}}}{(2k-1)|\overline{\Gamma}_P|\zeta(2k)} \\
&\times\bigg(2v_P^k \zeta(2k) + 2^{3-2k}v_P^{1-k}\pi \zeta(2k-1)\binom{2k-2}{k-1}-\frac{4(2k-1)!\zeta(2k)}{(4\pi)^{2k-1}(k-1)!} \Re(R_{2-2k}^{k-1}\mathcal{E}_{2k} (\tau_P))\bigg).
\end{align*}
Here we used the evalution of $r_0(E_{2k})$ given in Section~\ref{section prelims periods}. Writing out the Fourier expansion of $R_{2-2k}^{k-1}\mathcal{E}_{2k}(\tau_P)$ explicitly using the formula \eqref{raising and derivative} and the well-known expansion
\[
E_{2k}(\tau) = 1+\frac{(2\pi i)^{2k}}{(2k-1)!\zeta(2k)}\sum_{n=1}^\infty \sigma_{2k-1}(n)e(n\tau),
\]
and then using the formula for the Epstein zeta function given in equation (2.4) in \cite{smartepstein} (note that the quadratic form $Q$ in \cite{smartepstein} is normalized to have to discriminant $-4$), we see that the expression in the second line in the above formula for $r_0(f_{k,P}^+)$ equals $2^{-k}|d|^{\frac{k}{2}}\zeta_P(k)$. This proves the second part of Theorem~\ref{theorem rationality}.

	In order to prove the rationality statement in the third item in Theorem~\ref{theorem rationality}, we note that the assumption $\sum_n a_n R_n(\tau) = 0$ implies that the raised Eichler integrals of the cusp forms $R_n(\tau)$ in Theorem~\ref{theorem rn formula} cancel out in the linear combination $\sum_n a_n r_n(f_{k,P})$. In particular, $\sum_n a_n r_n(f_{k,P})$ is given by a rational linear combination of the values $|d|^{\frac{k-1}{2}}R_{2-2k}^{k-1}\mathcal{P}_{1-k,n}(\tau_P)$. Now Lemma~\ref{lemma P rational} implies the third item in Theorem~\ref{theorem rationality}. This finishes the proof.

\end{document}